\date{}
 \newtheorem{thm}{Theorem}[section]
 \newtheorem{prop}[thm]{Proposition}
 \newtheorem{lem}[thm]{Lemma}
 \newtheorem{corollary}[thm]{Corollary}
 \theoremstyle{definition}
 \newtheorem{example}[thm]{Example}
 \newtheorem{definition}[thm]{Definition}
 \newtheorem{remark}[thm]{Remark}
\numberwithin{equation}{section}
\newcommand{\bbA}{{\mathbb{A}}}
\newcommand{\bbG}{{\mathbb{G}}}
\newcommand{\bbP}{{\mathbb{P}}}
\newcommand{\bbZ}{{\mathbb{Z}}}
\newcommand{\bfF}{{\mathbf{F}}}
\newcommand{\frakf}{\mathfrak{f}}
\newcommand{\frake}{\mathfrak{e}}
\newcommand{\Gen}{\operatorname{Gen}}
\newcommand{\Cr}{\operatorname{Cr}}
\newcommand{\Sing}{\operatorname{Sing}}
\newcommand{\Bl}{\operatorname{Bl}}
\newcommand{\bp}{\operatorname{bp}}
\newcommand{\Pic}{\mathrm{Pic}}
\newcommand{\SL}{\mathrm{SL}}
\newcommand{\half}{\frac{1}{2}}
\newcommand{\da}{\dasharrow}
\newcommand{\la}{\langle}
\newcommand{\ra}{\rangle}
\newcommand{\calC}{\mathcal{C}}
\newcommand{\calO}{\mathcal{O}}
\newcommand{\calP}{\mathcal{P}}
\newcommand{\calD}{\mathcal{D}}
\newcommand{\calR}{\mathcal{R}}
\newcommand{\calH}{\mathcal{H}}
\newcommand{\calS}{\mathcal{S}}
\newcommand{\frakC}{\mathfrak{C}}
\newcommand{\dJ}{\mathrm{dJ}}
\newcommand{\norm}{\mathrm{norm}}
\newcommand{\beq}{\begin{equation}}
\newcommand{\eeq}{\end{equation}}
\title[Monoidal and submonoidal surfaces]{Monoidal and submonoidal surfaces, and Cremona 
transformations}
\author{Igor V. Dolgachev}
\address{Department of Mathematics,
University of Michigan, Ann Arbor, MI 48109}
\email{idolga@umich.edu}
\begin{document}
\dedicatory{
To the memory of Yuri I. Manin}

\begin{abstract} We study irreducible surfaces of 
degree $d$ in $\bbP^3$ that contain a line of multiplicity 
$d-1$ (monoidal surfaces)
 or $d-2$ (submonoidal surfaces). We relate them to congruences of lines and 
 Cremona transformations. Many of our results are not new and can be found in the classical 
literature, we give them modern proofs. In the last section, 
  we extend some of our results to hypersurfaces of arbitrary dimension. 
  We define two commuting Cremona involutions in the ambient space associated to 
  a linear subspace of multiplicity $d-2$ contained in the hypersurface. 
Both leave the hypersurface invariant, but one acts as the identity on the 
hypersurface. 
\end{abstract}

\maketitle

\tableofcontents
\section*{Introduction} 
 An irreducible reduced hypersurface $\Phi_d$ in $\bbP^n$ of degree $d > 1$ is called a 
 \emph{monoidal hypersurface} 
(resp. a \emph{submonoidal} hypersurface) if it contains a point of multiplicity $
d-1$ (resp. $d-2$). 

In the present paper, we study monoidal (submonoidal) surfaces $\Phi_d$  of degree $d\ge 3$ 
which contain a line $\Gamma$ of multiplicity $d-1$ (resp. $d-2$).

In the next section, we will discuss some canonical equations of monoidal surfaces. If $d = 3$, these equations 
give the classical canonical equations of two non-isomorphic ruled cubic surfaces which are not cubic cones. In the case $d = 4$,
they are ruled surfaces 
with a triple line. We find their canonical equations which simplify the equations which can be found in \cite{Edge}
or \cite{Salmon}. A monoidal surface is an example of a rational ruled surface in $\bbP^3$. We 
discuss possible normalizations of a monoidal surface, and also a construction
of a monoidal surface of degree $d$ as a projection of a minimal ruled surface embedded in $\bbP^{d+1}$.

In section 2, we study monoidal Cremona transformations defined by homaloidal 3-dimensional linear systems of monoidal 
surfaces with their singular line contained in the base locus. 
An example of a monoidal transformation is a quadro-cubic 
Cremona transformation defined by the linear system of cubic surfaces containing a fixed double line and three isolated points. 

 In Section 3, we introduce a condition of the 
non-degeneracy of a submonoidal surface. This condition is implicitly assumed in the classical 
literature. A non-degenerate submonoidal surface has only nodes as isolated singularities and its normalization 
is isomorphic to the proper transform in the blow-up of the singular line. The proper transform admits a structure of a conic bundle, and we study its possible singularities.
Using this, we reprove the classical result that a non-degenerate submonoidal surface admits at most $3d-4$ nodes as its isolated singular points. 
A quartic surface with a double line and eight nodes is the Pl\"ucker Complex Surface which arises in 
the theory of quadratic line complexes.

In section 4, we extend the classical construction of a birational model of quartic surface with a 
double line as the blow-up of nine points in the plane. We show that a submonoidal surface of degree $d$ 
is isomorphic to the image of the plane under a rational map defined by the linear system of plane curves 
of degree $d$ passing simply through $3d-4$ points $p_1,\ldots,p_{3d-4}$ and with multiplicity $d-2$ through an additional point $p_0$.
The line $\Gamma$ is the image of a unique curve of degree $d-1$ that passes through $p_1,\ldots,p_{3d-4}$ 
and has $p_0$ as a point of multiplicity $d-3$. We use the blow-up model to find special sections of the conic bundle on a submonodidal surface of degree $d$.
These are rational curves of degree $d-2$ that intersect $\Gamma$ at $d-3$ points. We show that there are 
$2^{3d-5}$ such curves and they come in pairs, the union of the curves in a pair is cut out by a hypersurface 
of degree $d-3$. We associate to a special section $S$ a congruence of lines $\calC_S$ of order 1 and class $d-2$. 
The surface $\calC_S$ is a ruled surface contained in the Grassmannian $G_1(\bbP^3)$, and we relate 
sections of the conic bundle on $\Phi_d$ to directrices of the ruled surface.  
 
In section 5, using the de Jonqui\`eres birational transformation of $\bbP^2$ associated with the hyperelliptic curve 
$\Sigma$, we construct a canonical Cremona involution of $\bbP^3$ that leaves the surface invariant and 
induces a de Jonqu\`eres involution of the blow-up plane model of the surface. 

One can extend some results of the paper to higher dimensional hypersurfaces in $\bbP^n$ of degree $d$ 
that contain a linear subspace $\Gamma$ with multiplicity $d-2$. We do not pursue this in this paper, 
however, in the last section, we briefly discuss such hypersurfaces and introduce the satellite polar variety canonically 
associated with them. We construct a Cremona transformations $\Theta_\Gamma$ of $\bbP^n$ that 
leaves the hypersurfaces invariant, 
it fixes pointwise 
$\Gamma$ and the satellite polar variety. As in the case of surfaces, there is 
another Cremona involution $\Theta_\Gamma'$ that fixes the 
hypersurface pointwise. In the case of cubic hypersurfaces with $0$-dimensional $\Gamma$, the
 transformations $\Theta_\Gamma$ 
were first studied by Yuri Ivanovich Manin \cite{Manin}. 

A large part of the paper should be considered as a modern exposition of the known facts which can be found in classical 
literature, for example, in monographs \cite{Jessop}, \cite{Salmon}, and \cite{Sturm2}. 
It would be hard to give the precise numerous references. 
 
\vskip5pt

The author thanks J\'er\'emy Blanc, Ivan Cheltsov and Alessandro Verra for helpful comments on the first draft of the paper.

\section{Monoidal surfaces}
\subsection{Canonical equations} Let $\Phi_d$ be a monoidal surface containing a line $\Gamma$ 
with multiplicity 
$d-1$.
We identify the pencil of planes through $\Gamma$ with the dual line $\Gamma^\perp$ in the dual $\bbP^3$. 
 If $\Phi_d$ contains a singular point $x_0$ outside $\Gamma$, then every line through $x_0$ that 
 intersects $\Gamma$ is contained in the surface, and hence $\Phi_d$ is a cone. We will exclude this case.
 
  Intersecting $\Phi_d$ with planes from the pencil 
$\Gamma^\perp$ shows that $\Phi_d$ is a ruled surface  
with the ruling defined by the residual lines.

We choose coordinates such that $\Gamma = V(x_0,x_1)$ and write the equation of $\Phi_d$ as 
\beq\label{eqn1}
F_d = A_d(x_0,x_1)+x_2B_{d-1}(x_0,x_1)+x_3C_{d-1}(x_0,x_1) = 0,
\eeq
where $A_d,B_{d-1}$ and $C_{d-1}$ are binary forms of degrees indicated by the subscripts.
Since $\Phi_d$ is irreducible, the binary forms $A_d,B_{d-1},C_{d-1}$ are mutually coprime.
 If $B_{d-1}$ and $C_{d-1}$ are proportional, say $B_{d-1} = t_0 C_{d-1}$, then, replacing  $x_2$ with 
 $x_2+t_0 x_3$, we obtain that $\Phi_d$ is a cone with vertex $[0:0:0:1]$.  We excluded this case.
 
 It follows from the adjunction formula \eqref{adjunction} that any automorphism of a monoidal or submonoidal surface 
 comes from a projective automorphism. Since the group of projective automorphisms of $\bbP^3$ that leaves invariant 
 a line is of dimension $11$,  
the dimension of the moduli space of monoidal surfaces of degree $d\ge 4$ is equal to $3d-11$. 
 
\begin{example}\label{ex:cubic1} Assume $d = 3$. If $B_2$ and $C_{2}$ are coprime binary quadrics, 
we can find linear forms $L_1$ and $L_2$ such that $A_3 = L_1B_2+L_2C_2$. This eliminates $A_d$ and, after reducing 
$C_2$ and $C_3$ to a sum of squares, we reduce
the equation to the form
\beq\label{eqncubic1}
x_0^2x_2+x_1^2x_3 = 0.
\eeq
If $B_2$ and $C_2$ share a root, we may assume that $x_0|B_2$ and $x_0|C_2$. We get 
$$x_1^3+x_0L_1(x_0,x_1)x_2+x_0L_2(x_0,x_1)x_3 = 0,$$
and we can further assume, after a linear change of $x_2$ and $x_3$, that $L_1 = x_1$ and $L_2 = x_0$.
We arrive at the canonical equation
\beq\label{eqncubic2}
x_1^3+x_0x_1x_2+x_0^2x_3 = 0.
\eeq
Equations \eqref{eqncubic1} and \eqref{eqncubic1} are well-known canonical equations of 
non-conical cubic ruled surfaces (see \cite[Theorem 9.2.1]{CAG}).
\end{example}

 We could use a further reduction of canonical equations by applying the theory of invariants of 
pencils of binary forms. A linear change of variables $(x_1,x_2)$ changes a basis in the pencil of binary forms 
of degree $d-1$ spanned by $B_{d-1}$ and $C_{d-1}$. If these forms are coprime, they span  
a base-point-free pencil in the space $V(d-1)$ of binary forms of degree $d-1$. It 
can be considered as a point in the Grassmannian $G(2,V(d-1))$. The group $\SL(2)$ acts faithfully on $G(2,V(d-1))$ 
via its natural linear 
representation in $V(d-1)$. It is known that the orbit of the pencil depends only on the ramification scheme of the map 
$\bbP^1\to \bbP^1$ of degree $d-1$ defined by the pencil \cite{DeopurkarPatel}. A general orbit corresponds to a map 
with $2d-4$ simple ramification points, such orbits are parameterized by a variety of dimension $2(d-2)-3 = 2d-7$. 
After we fix an orbit, we can change the variables 
\beq\label{changeover}
(x_1,x_2)\mapsto (cx_1+l_1(x_0,x_1),x_2+l_2(x_0,x_1),
\eeq
where $l_1,l_2$ are 
linear forms. This leaves us with $(2d-7)+(d+1-5) = 3d-11$ parameters that agrees with the previous count of constants.

 \begin{example}\label{ex:quartics1} Assume $d = 4$. 
 A degree $3$ map $f:\bbP^1\to \bbP^1$ may have possible ramification schemes: $(2,2,2,2), (2,2,3),$ and $(3,3)$. 
The orbits of pencils of cubic binary forms with four simple ramification points depend on one parameter.  
 They can be represented by pencils of polar cubics of 
a general quartic binary form. A canonical equation of a general binary quartic form 
is $x_0^4+2\lambda x_0^2x_1^2+x_1^4$. 
Thus we may assume that $B_3 = x_0^3+\lambda x_0x_1^2$ and $C_3 = x_1^3+\lambda x_0^2x_1$. 
The pencil defines a map $\bbP^1\to \bbP^1$ with four simple ramification points if and only 
if $\lambda \ne 0,\pm 1,\pm 3$. In this case, we can use \eqref{changeover} to transform 
$A_4(x_0,x_1)=\sum_{i=0}^4a_ix_0^{4-i}x_1^i$ to $x_0^2x_1^2$ or to $0$ 
(if $a_2 = \lambda (a_0+a_4)$). This gives canonical equations of a general monoidal quartic surface
\begin{eqnarray}\label{(i)}
&&x_0^2x_1^2+(x_0^3+\lambda x_0x_1^2)x_2+(x_1^3+\lambda x_0^2x_1)x_3 = 0,\quad \lambda^2 \ne 0,1,9.\\
&&(x_0^3+\lambda x_0x_1^2)x_2+(x_1^3+\lambda x_0^2x_1)x_3 = 0, \quad \lambda^2 \ne 0,1,9.\label{(ii)}
\end{eqnarray}
If we take $\lambda = \pm 3$ (replacing  $x_0$ with $ix_0$, we may assume $\lambda = 3$),
the pencil has the ramification scheme $(2,2,3)$. 
 There is a unique orbit with this ramification
scheme, so we may take the pencil of polars of $x_0^4+6x_0^2x_1^2+x_1^4$ as its representative.  
This gives the normal forms
\begin{eqnarray}\label{(iii)}
&&x_0^2x_1^2+(x_0^3+3x_0x_1^2)x_2+(x_1^3+3x_0^2x_1)x_3 = 0,\\
&&(x_0^3+3x_0x_1^2)x_2+(x_1^3+3x_0^2x_1)x_3 = 0.\label{(iv)}
 \end{eqnarray}
If $\lambda = \pm 1$, we obtain a pencil with two base points (may be equal)) that gives canonical equation
\begin{eqnarray}\label{(v)}
&&x_0^2x_1^2+(x_0^2+x_1^2)(x_0x_2+x_1x_3) = 0,\\
&&x_0^2x_1^2+(x_0+x_1)^2(x_0x_2+x_1x_3) = 0,\label{(vi)}
\end{eqnarray}
There is one more case of pencils with one base point. It gives canonical equation
\beq\label{(vii)}
x_0^2x_1^2+(x_0+x_1)(x_0^2x_2+x_1^2x_3) = 0.
 \eeq

All normal forms reveal a non-trivial projective symmetry $(x_0,x_1,x_2,x_3)\mapsto (x_1,x_0,x_3,x_2)$.
Our equations agree with Salmon's equations from \cite[Chapter XVI, Art. 546--549]{Salmon}. Equations \eqref{(i)} and 
\eqref{(ii)} (resp. equations \eqref{(iii)} and \eqref{(iv)}) are  Salmon's type I  (resp, II). 
Equation \eqref{(vii)} (resp. \eqref{(v)}, resp. \eqref{(vi)}) is Salmon's type III (resp. IV, resp. V).

Salmon also finds the equations of the dual surfaces, some of them are self-dual. 

\end{example}

\subsection{Monoidal surfaces and rational ruled surfaces in $\bbP^3$}
A ruled surface $F$ in $\bbP^N$ is the image in $\bbP^N$ of the tautological 
$\bbP^1$-bundle over an irreducible 
curve $\Gen(F)$ in the Grassmannian $G_1(\bbP^N)$ of lines in $\bbP^N$. The curve $\Gen(F)$ 
is the 
\emph{generatrix} of $F$. The lines of the ruling are \emph{generators} of $F$. 
The pull-back of the $\bbP^1$-bundle to the normalization $\Gen(F)^{\norm}$ is 
a minimal ruled surface over $\Gen(F)^{\norm}$. When $F$ is rational, and only those 
we will encounter in this paper, $\Gen(F)^{\norm}\cong \bbP^1$ and 
the minimal ruled surface is isomorphic to $\bfF_n: = \bbP(\calO_{\bbP^1}\oplus 
\calO_{\bbP^1}(-n))$ for some $n\ge 0$. The map $\bfF_n\to F$ is the normalization map unless 
$F$ is a cone.  We assumed that $F$ is not a cone.
 
Another invariant of a ruled surface besides its normalization  
is the smallest degree of its \emph{directrix}, an irreducible curve on $F$ that intersects 
each generator at one point and is not contained in $\Sing(F)$. If $F$ is smooth, the directrix $D_{\min}$ of 
smallest degree coincides with the image of the exceptional section 
$\frake$ of $\bfF_n$ (or of any generator from another ruling on $\bfF_0$). 

A non-degenerate rational ruled surface $F$ of degree $d$ in $\bbP^{d+1}$ is an example of a 
non-degenerate surface of minimal degree. In particular, it must be a normal surface. Conversely, 
a normal ruled surface is isomorphic to a non-degenerate surface of minimal degree.

Take $d-1$ general points $x_1,\ldots,x_{d-1}$ on $F$ that span 
a projective subspace $L^{d-2}$ of dimension $d-2$. The projection to $\bbP^2$ with the center at $L^{d-2}$ defines 
a rational map $F\da \bbP^2$. It can be regularized by passing to 
the blow-up $\Bl_\Sigma(F)$ of the set of points $\Sigma = \{x_1,\ldots,x_{d-1}\}$. The birational morphism 
$\Bl_\Sigma(F)\to \bbP^2$ blows down the proper transforms $g_i'$ of generators $g_i$ of $F$ passing through $x_i$ 
to points $p_i$ in the plane. The span of $L^{n-2}$ and a generator $g$ skew to it spans a hyperplane that cuts out $F$ along $g$ and a curve 
$C_{d-1}(g)$ of degree $d-1$. When we let $g$ vary in a pencil, we obtain a pencil of hyperplanes. It corresponds 
to the pencil of lines in the plane passing through a point $O$. Taking $g = g_i$, we get a hyperplane section that consists
of $g_i$ and a curve of degree $d-1$ containing $x_1,\ldots,x_{d-1}$. This means that $O$ is a 
$(d-1)$-multiple point 
of the projections of hyperplane sections. Also, it shows $F$ admits a rational parameterization 
$\phi:\bbP^2\da F$ given by the complete linear system 
\beq\label{beltrametti}
|L| = C^d(p_0^{d-1},p_1,\ldots,p_{d-1})
\eeq
of curves of degree $d$ passing through points $p_1,\ldots,p_{d-1}$ and 
containing a point $p_0$ with multiplicity $\ge d-1$ (see \cite[10.3.6]{Beltrametti}). 
The map $\phi$ regularizes on the blow-up $X = \Bl_{p_0,p_1,\ldots,p_{d-1}}(\bbP^2)$. 
In the standard geometric 
basis $(e_0,e_1,\ldots,e_d)$ of $X$, where $e_d$ is the divisor class of the exceptional curve 
$E_d$ over $p_0$, the proper transform of $|L|$ is given by 
$$|M| = |de_0-e_1-\cdots-e_{d-1}-(d-1)e_d|.$$

 The pencil $|e_0-e_d|$ is mapped 
to the ruling of $F$. A directrix $D_m$ of degree $m$ is the image of a curve $C$ on $X$ 
such that $C\cdot e_i \le 1, i= 1,\ldots, d-1, C\cdot (e_0-e_d) = 1$, and $C\cdot M = m$. It implies that 
$$C \in  |(m+\#I-d+1)e_0-\sum_{i\in I}e_i-(m+\#I-d)e_{d}|,$$
where $I = \{i\in \{1,\ldots,d-1\}, x_i\not\in D_m \}$. 

For example, the image of general line in 
the plane is a directrix 
of degree $d$ that contains all the points $x_i$. We have 
$$C^2 = 2m-2d+1+\#I, \quad -C\cdot K_X = 2m-2d+3+\#I,$$
so we expect that 
$$\dim |C| =  2m+\#I-2d+2.$$
It suggests that the smallest degree of a directrix is equal to
$k = \lceil \frac{d-1}{2}\rceil$, and in this case $\#I = d-1$. It  represents a curve 
$C^k(p_0^{k-1},p_1,\ldots,p_{d-1})$ in the plane. The curve is unique if $d$ is even, and vary in a pencil if $d$ is odd.

In fact, we have the following (see \cite[10.4.6]{CAG}).

\begin{lem}\label{directrix} Let $F\cong \bfF_e\hookrightarrow \bbP^n$ be an embedding of a minimal rational 
ruled surface $\bfF_e$ of degree $n-1$ by a complete linear system
$|a\frakf+\frake|$, where $\frake^2 = -e, 2a-e= n-1$. Then the  degree $\deg(D)$ of a directrix $D$ on $X$ satisfies 
$$\deg(D)\ge a-e = \half (e+\deg(F))-e,$$ 
and the equality takes place if and only if $D$ is the image of the exceptional section from $|\frake|$.
\end{lem}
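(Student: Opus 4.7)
The plan is to translate everything into intersection theory on $\bfF_e$. Recall $\Pic(\bfF_e)=\bbZ\frakf\oplus\bbZ\frake$ with intersection form $\frakf^2=0$, $\frakf\cdot\frake=1$, $\frake^2=-e$. A directrix $D$ is by definition a section of the ruling $\bfF_e\to\bbP^1$, hence $D\cdot\frakf=1$, and so its numerical class must have the shape
$$D\sim \alpha\frakf+\frake$$
for some integer $\alpha$. Intersecting with the hyperplane class, I would compute
$$\deg(D)=D\cdot(a\frakf+\frake)=\alpha+a-e,$$
and note that $\deg(F)=(a\frakf+\frake)^2=2a-e$, which already gives the claimed identity $a-e=\half(e+\deg F)-e$.

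Next I would pin down the lower bound on $\alpha$ by a dichotomy using irreducibility. If $D\neq\frake$, then $D$ and the exceptional section are distinct irreducible curves, so $D\cdot\frake\ge 0$; but $D\cdot\frake=\alpha-e$, yielding $\alpha\ge e$ and hence $\deg(D)\ge a$. If instead $D$ is the exceptional section itself, i.e.\ $\alpha=0$, then $\deg(D)=a-e$. Since $a-e\le a$, the minimum in either case is $a-e$, and it is attained precisely when $D$ lies in the class of $\frake$.

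For the equality clause I would split on the value of $e$. When $e>0$, the class $\frake$ contains a unique irreducible curve (the negative section), so any directrix of minimum degree $a-e$ must literally be the image of the exceptional section. When $e=0$, $|\frake|$ is a base-point-free pencil on $\bfF_0\cong\bbP^1\times\bbP^1$ whose members are the fibers of the second ruling; every such member is a directrix of minimum degree $a$, consistent with the parenthetical allowance in the lemma. The only subtle point — and really the only place where care is required — is the breakdown of the estimate $D\cdot\frake\ge 0$ in the case $D=\frake$, which one must handle separately and check gives the strictly smaller degree $a-e$ rather than $a$.
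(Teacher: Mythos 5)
Your proof is correct and is the standard intersection-theoretic argument: the paper gives no proof of this lemma, citing only \cite[10.4.6]{CAG}, and your computation (writing $D\sim\alpha\frakf+\frake$ since $D\cdot\frakf=1$, computing $\deg(D)=\alpha+a-e$, and using $D\cdot\frake\ge 0$ unless $D$ coincides with the exceptional section) is precisely the argument behind that reference. Your separate handling of $e=0$ versus $e>0$ in the equality case correctly accounts for the paper's parenthetical remark that on $\bfF_0$ any member of the second ruling realizes the minimum.
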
 

Our ruled surface $\Phi_d$ is the image of some $\bfF_e$ under a linear projection to $\bbP^3$. 
The projection of 
a directrix is a directrix on $\Phi_d$ of the same degree. Thus, the smallest degree $d_{\min}$ of a directrix on 
$\Phi_d$ is equal to $\half (e+d)-e\le \lceil \frac{d-1}{2}\rceil$. The equality takes place if and only if 
$(d,e) = (2m+1,0)$, or $(d,e) = (2m,1))$.

 So we expect that the normalization $F^{\norm}$ of a general $F$ is 
isomorphic to $\bfF_1$ (resp. $\bfF_0$) if $d$ is even (resp. $d$ is odd). For example, if $A_d = 0$ in its equation \eqref{canequation}, then the line $V(x_2,x_3)$ is its
 directrix 
of degree $1$.

If $F$ is not general, then a directrix $D$ of degree $m$ smaller than 
$\lceil \frac{d}{2}\rceil$ necessarily passes through some points $x_i$, and it has negative self-intersection. 
Since it coincides with the exceptional section of $F^{\norm}$, we get 
\beq\label{fn}
F\cong \bfF_{d-2m}.
\eeq

Note that the blow-up of $\bbP^2$ at the point $p_0$ is a minimal ruled surface isomorphic to $\bfF_1$.
The surface $X = \Bl_{p_0,\ldots,p_{d-1}}(\bbP^2)$ is obtained from this surface by blowing up $d-1$ points not lying 
on the exceptional section of $\bfF_1$. The map $X\to F$ blows down the proper transforms of the 
generators passing through these points. We see that $F$ is obtained from $\bfF_1$ by a composition of $d-1$ 
elementary birational transformations $\textrm{elm}_{x_i}$ (see \cite[7.4.2]{CAG}).

The normalization $\Phi_d^{\norm}$ of $\Phi_d$ is isomorphic to the proper transform of $\Phi_d$ in 
the blow-up $\sigma:\Bl_{\Gamma}(\bbP^3)\to \bbP^3$. It is well-known that 
\beq\label{blowup}
\Bl_{\Gamma}(\bbP^3)\cong \bbP(\calO_{\bbP^1}^{\oplus 2}\oplus\calO_{\bbP^1}(1)).
\eeq
The projection $\pi:\bbP\to \bbP^1$ is given by the pencil of planes through $\Gamma$, so we may identify the base of the projective 
$\bbP^2$-bundle $\bbP$ with $\Gamma^\perp$. 

We have 
\beq\label{pic}
\Pic(\bbP) = \bbZ H +\bbZ P,
\eeq
where $H  = c_1(\calO_\bbP(1)) = \sigma^*c_1(\calO_{\bbP^3}(1))$ and $P = \pi^*c_1(\calO_{\bbP^1}(1))$. The exceptional divisor 
$E$ of the blow-up is linearly equivalent to $H-P$ and 
\beq\label{classX}
\Phi_d^{\norm} \sim dH-(d-1)E \sim H+(d-1)P.
\eeq 

We know from \eqref{fn} that 
$$\Phi_d^{\norm}\cong \bfF_n,\quad n = d-2m,$$
where $m$ is the smallest degree of a directrix of $\Phi_d$. 

We use that 
$$\bbP = (\bbA^3\setminus \{0\})\times (\bbA^2\setminus \{0\})/\bbG_m^2,$$
where the action is defined by 
$$(\lambda,\mu):((y_0,y_1,y_2)),(t_0,t_1))\mapsto ((\lambda^{-1} y_0,\mu y_1,\mu y_2),(\lambda t_0,\lambda t_1)).$$
with the projections 
$$\sigma:\bbP\to \bbP^3, \ ([y_0,y_1,y_2],[t_0,t_1])\mapsto [t_0y_0,t_1y_0,y_1,y_2]$$
and the projection 
$$\sigma:\bbP\to \bbP^1 = \Gamma^\perp, \ ([y_0,y_1,y_2),(t_0,t_1)]\mapsto [t_0,t_1].$$

The proper transform of $\Phi_d$ in $\bbP$ is now given by equation
\beq\label{canequation}
A_d(t_0,t_1)y_0+B_{d-1}(t_0,t_1)y_1+C_{d-1}(t_0,t_1)y_2 = 0.
\eeq

It can be also considered as the equation of a residual line. It shows that each point 
on $\Gamma$ is contained in $d-1$ residual lines.

The curve $\Sigma$ contains a unique irreducible component $\Sigma_0$ that defines a section of 
$\sigma:\Gamma^\perp\to \bbP$. All other components are fibers of the projection 
$\sigma|E:E\to \Gamma^\perp$.

 The equation of the intersection $\Sigma:=\tilde{\Phi}_d\cap E$ becomes
$$B_{d-1}(t_0,t_1)y_1+C_{d-1}(t_0,t_1)y_2 = 0.$$
It exhibits $\Sigma$ as a curve in $E\cong \Gamma\times \Gamma^\perp$ of bidegree $(1,d-1)$. 
The projection $\Sigma\to \Gamma$ is a finite map of degree $d-1$. 
 These are the points $x\in \Gamma$ where a plane in $\Gamma^\perp$ is tangent to $\Phi_d$ along 
a line passing through $x$. If $\Sigma$ is smooth, the branch points are called the \emph{pinch points}. We expect 
$2(d-2)$ pinch points.

Let $\Phi_d^{\norm}\cong \bfF_{d-2m}\hookrightarrow \bbP^{d+1}$ be the embedding of 
$\Phi_d^{\norm}$ by the complete linear system $|(d-m)\frakf+\frake|$.


The surface $\Phi_d$ is a projection of $\Phi_d^{\norm}\subset \bbP^{d+1}$ to $\bbP^3$ from a subspace 
$L^{d-3}$ of codimension $4$. The image of the curve $\Sigma$ spans a subspace $M^{d-1}$ of codimension $2$. 
The projection map $p:\Sigma\to \Gamma$ is of degree $d-1$, hence $L^{d-3}$ must be contained in $M^{d-1}$. 
The projection $p$ is defined by the pencil of hyperplanes in $M^{d-1}$ containing $L^{d-3}$. 

The image of the exceptional section is a directrix $D_{\min}$ of degree $m$ that spans a linear subspace $M^{m-1}$ of dimension 
$m-1$ disjoint from the center of the projection. Since $\frake\cdot [\Sigma] = m-1$, the 
directrix $D_{\min}$ intersects $\Gamma$ at $m-1$ points.

\begin{example}\label{ex:cubic2} Assume $d = 3$ and keep the notation from Example \ref{ex:cubic1}. 
It follows from the previous 
formulas that the smallest degree of a directrix on $\Phi_3$ is equal to 1. In fact, this can be easily derived from the 
equations. If $A_3= 0$, then $V(x_2,x_3)$ is a line directrix. 
If $A_3\ne 0$, the line $V(x_1,x_3)$ is a line directrix. 
Thus $\Phi_d^{\norm}\cong \bfF_1$ embedded in $\bbP^4$ by the linear system $|2\frakf+\frake|$. It is a
smooth cubic ruled surface. The projection from a point in $\bbP^4\setminus \Phi_d^{\norm}$ maps 
it to $\Phi_d$. There are two possibilities:
the curve $\Sigma\in |\frakf+\frake|$ is irreducible, or $\Sigma$ is the union of the exceptional section 
identified with $\frake$ and a fiber $F\in |\frakf|$. In the first case, we get a surface with canonical equation \eqref{eqncubic1}. The 
double curve is a non-exceptional section. The image of the exceptional section is the line $x_2=x_3 = 0$. 
In the second case, the pre-image of $\Gamma$ consists of the exceptional section and a line from the ruling. 

\end{example}

\begin{example} Assume $d = 4$.
Then the minimal degree $m$ of a directrix is equal to $2$ (this is the general case) or $m = 1$.
In the first case, 
$|2\frakf_1+\frakf_2|$ maps $\Phi_d^{\norm}\cong \bfF_0$ to a smooth quartic ruled surface in $\bbP^5$, and 
in the second case, $|3\frakf+\frake|$
 embeds $\Phi_d^{\norm}\cong \bfF_2$ in $\bbP^5$, also as a smooth quartic surface. The center of the 
 projection is a line disjoint from $\tilde{\Phi}_d$. In the classification of 
 quartic ruled surfaces in $\bbP^3$ 
 (see \cite[10.4.4]{CAG}, \cite{Edge}), the first case 
corresponds to Types II (A) and III (A) in Edge's notation. 
His equations of the surfaces can be simplified and correspond to 
 our canonical equations \eqref{(i)},\eqref{(iii)} for Type II (A),  and \eqref{(vii)} for Type III (A). 
 
 The second case corresponds to Type IV (A) and V (B). Edge's equations 
are our equations \eqref{(ii)}, \eqref{(ii)} for Type IV (A) and \eqref{(v)},\eqref{(vi)} for Type V (B). 
 The double curve is irreducible if the surface is of Type IV (A) in Edge's notation.
 \end{example}

\section{Monoidal Cremona transformations}
 According to a modern terminology, a \emph{monoidal birational 
 transformation} (or a $\sigma$-process) is the blow-up of a smooth subvariety of a projective space 
 \cite[Chapter IV, \S 3]{Hartshorne}. The following is the classical definition of a monoidal 
 trasnformation \cite{Hudson}.

 \begin{definition} A birational transformation $T:\bbP^n\da \bbP^n$ is called a \emph{monoidal transformation} if 
 it is given by a linear system  whose general member is a monoidal hypersurface of degree $d$ whose singular 
 locus $\Gamma$ of multiplicity 
 $d-1$ is contained in the base locus.
 \end{definition}
 
 The two definitions are obviously related, in order to resolve the indeterminacy points of a monoidal transformation 
 one has to blow up its base locus.
 
 Here we will discuss a particular case when $n = 3$ and monoidal surfaces are ruled surfaces of degree $d$ 
 with a $(d-1)$-multiple line.

 \begin{lem}  Let $\Phi_d$ and $\Phi_d'$ be two monoidal surfaces of degree $d$ 
 containing a line $\Gamma$ with multiplicity $d-1$.  Then
 $$\Phi_d\cap \Phi_d' = (d-1)^2\Gamma+C_{2d-1},$$
 where $C_{2d-1}$ is a curve of degree $d^2-(d-1)^2 = 2d-1$. 
 Assume that $C_{2d-1}$ intersects transversally $\Gamma$. Then the number of intersection points 
 is equal to $2d-2$.
 \end{lem}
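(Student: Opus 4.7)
The plan is to prove the two assertions in turn. For the decomposition, I would first verify that $\Gamma$ appears in the scheme $\Phi_d\cap \Phi_d'$ with multiplicity exactly $(d-1)^2$. This is a local computation at a general point $p$ of $\Gamma$: choose affine coordinates $(u,v,w)$ near $p$ with $\Gamma=\{u=v=0\}$; each of $F_d, F_d'$ has vanishing order exactly $d-1$ along $\Gamma$ and its leading form in $(u,v)$ is, for two generic monoidal surfaces, a nonzero binary form of degree $d-1$. The scheme-theoretic intersection thus picks up $\Gamma$ with multiplicity $(d-1)^2$ at $p$. Combined with Bezout $\deg(\Phi_d\cap \Phi_d')=d^2$, this yields $\deg C_{2d-1}=d^2-(d-1)^2=2d-1$.

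For the count of $C_{2d-1}\cap \Gamma$, I would pass to the blow-up $\sigma\colon \bbP=\Bl_{\Gamma}(\bbP^3)\to \bbP^3$ introduced in Section~1. By \eqref{classX} both strict transforms $\tilde\Phi_d$ and $\tilde\Phi_d'$ lie in the class $H+(d-1)P$, and since $\tilde\Phi_d\cdot \tilde\Phi_d'$ has degree $2d-1$ in $\bbP^3$, it coincides with the strict transform $\tilde C_{2d-1}$ as $1$-cycles. A direct computation in the Chow ring of $\bbP$, using $H^3=H^2P=1$ and $P^2=0$ (distinct fibers of $\pi$ are disjoint), gives
\begin{equation*}
\tilde C_{2d-1}\cdot E \;=\; (H+(d-1)P)^2(H-P) \;=\; 2(d-1).
\end{equation*}
The same number can be read off more concretely from the equations \eqref{canequation}: on $E\cong \Gamma\times\Gamma^\perp\cong \bbP^1\times\bbP^1$ the curves $\Sigma=\tilde\Phi_d\cap E$ and $\Sigma'=\tilde\Phi_d'\cap E$ have bidegree $(1,d-1)$, and any two such curves meet in $1\cdot(d-1)+(d-1)\cdot 1=2(d-1)$ points.

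It remains to match $\tilde C_{2d-1}\cap E$ with $C_{2d-1}\cap \Gamma$. Under the transversality hypothesis, $C_{2d-1}$ is smooth at each point $p$ of $C_{2d-1}\cap \Gamma$ with a well-defined tangent direction transverse to $\Gamma$, so the strict transform $\tilde C_{2d-1}$ meets $E$ in a single reduced point over $p$ corresponding to that tangent direction. Consequently $|C_{2d-1}\cap \Gamma|=|\tilde C_{2d-1}\cap E|=2d-2$.

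The delicate step that I expect to require most care is the final matching: one must rule out that $\tilde\Phi_d\cap \tilde\Phi_d'$ has components supported inside $E$ and that $\Sigma\cap \Sigma'$ consists only of images of points of $\tilde C_{2d-1}\cap E$, each of multiplicity one. Both issues are handled by the transversality assumption together with the fact that the $(1,d-1)$-curves $\Sigma,\Sigma'$ are cut out by coprime forms in coprime pairs $(B_{d-1},C_{d-1})$ and $(B_{d-1}',C_{d-1}')$, so that the scheme $\Sigma\cap \Sigma'$ is zero-dimensional and reduced; this is the bookkeeping step that turns the numerical count $2d-2$ into an honest count of geometric points.
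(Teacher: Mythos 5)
Your argument is correct, but it takes a genuinely different route from the paper's. The paper's proof is a one-line count in the pencil $\Gamma^\perp$: a general plane $\Pi$ through $\Gamma$ meets each surface in $(d-1)\Gamma$ plus a residual line, the two residual lines meet in a single point off $\Gamma$, so of the $2d-1$ points of $\Pi\cap C_{2d-1}$ exactly one lies outside $\Gamma$, whence $\#(C_{2d-1}\cap\Gamma)=2d-2$. You instead pass to $\Bl_\Gamma(\bbP^3)$ and compute $(H+(d-1)P)^2\cdot(H-P)=2(d-1)$ in the Chow ring (the arithmetic checks out, as does the equivalent bidegree count $\Sigma\cdot\Sigma'$ on $E\cong\bbP^1\times\bbP^1$), then match $\tilde C_{2d-1}\cap E$ with $C_{2d-1}\cap\Gamma$ using transversality. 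What your version buys is that it makes explicit the bookkeeping the paper suppresses: the verification that $\Gamma$ has multiplicity exactly $(d-1)^2$ (the paper never proves the first assertion of the lemma at all), the exclusion of intersection components inside $E$, and the reducedness of $\tilde C_{2d-1}\cap E$. What it costs is length, and note that both proofs rest on the same unstated genericity hypothesis: your step ``the leading forms along $\Gamma$ are coprime'' is exactly the paper's ``the residual lines meet at one point (off $\Gamma$)'', and both can fail, e.g.\ when the two surfaces share the same pair $(B_{d-1},C_{d-1})$, in which case $\Gamma$ acquires multiplicity $d(d-1)$ and the statement itself needs amending. Flagging that the lemma implicitly assumes this non-proportionality would make your write-up strictly more precise than the original.
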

 
 \begin{proof} Take a general plane $\Pi$ in $\Gamma^\perp$. Its residual intersections with 
$\Phi_d$ and $\Phi_d'$ are lines. They intersect at one point. This shows that $\Pi$ intersects $C_{2d-1}$ 
along $C_{2d-1}\cap \Gamma$ and one point. Hence $\#C_{2d-1}\cap \Gamma = 2d-1-1 = 2(d-1)$.
\end{proof}

 The linear system of monoidal surfaces of degree $d$ with a line $\Gamma$ of multiplicity $d-1$ is of dimension $(d+1)+2d-1 = 3d$.
Choose $\alpha$ general lines 
 $\ell_1,\ldots,\ell_{\alpha}$ intersecting 
 $\Gamma$ and $\beta$ general points $P_1,\ldots,P_{\beta}$ in $\bbP^3$. Since passing through $\ell_i$ imposes 2 
 conditions on monoidal surfaces, we 
 obtain that the dimension of the linear system 
 $$|L| = |\calO_{\bbP^3}(d)-(d-1)\Gamma-\sum_{i=1}^{\alpha}\ell_i-\sum_{i=1}^{\beta}P_i|$$
 is equal to $3d-2\alpha-\beta$. It is a web (i.e $\dim |L| = 3$) if
 \beq\label{cond1}
 2\alpha+\beta = 3(d-1).
 \eeq
 Suppose that the web $|L|$ is homaloidal, that is, it defines a birational map $T:\bbP^3\da \bbP^3$.   
 Its restriction to a general plane $\Pi$ is given by 
 the monoidal linear system of curves of degree $d$ with $(d-1)$-multiple point $p_0 = \Pi\cap \Gamma$ 
 and $\alpha$ simple points 
 $p_i = \ell_i\cap \Pi$.  
 The image of $\Pi$ under the restriction of $T$ to $\Pi$ is a monoidal surface of degree 
 $$d' = 2d-1-\alpha.$$ 
Varying $\Pi$, we obtain a homaloidal web of monoidal surfaces of degree $d'$. 
 It defines the inverse birational transformations 
 $T^{-1}$.
 
 Replacing $T$ with $T^{-1}$, we get the following equalities 
\beq\label{cond2}
2\beta+\alpha = 3(d'-1), \quad \beta = 2d'-d-1
\eeq
 (see \cite[p.216]{Hudson}).
 The image of each line in the plane through $p_0$ is a line. The union of lines 
 $\la p_i,p_0\ra$ and any line through $p_0$ form a subpencil of cones with vertex at $p_0$ in the linear system. 
 This shows that the image of the point $p_0$ is a line $\Gamma'$ taken with multiplicity $d-1$

Consider the projective space of monoidal surfaces of degree $d-1$ with a line $\Gamma$ of multiplicity $d-2$. 
The dimension of this space is equal to $3d-3$. Using \eqref{cond1}, we see that there exists a unique surface $J_{d-1}$ from this space that 
contains $\alpha$ lines $\ell_i$ and $\beta$ points $P_i$. The union $J_{d-1}+\Pi$, 
where $\Pi\in \Gamma^\perp$, belongs to $|L|$. This shows that the image of the surface $J_{d-1}$ under $T$ is a line $\Gamma'$.
In other words, $J_{d-1}$ is an exceptional surface of $T$. Other exceptional surfaces are the planes 
$\Pi_i=  \la \Gamma,\ell_i\ra, i = 1,\ldots,\alpha,$ and $\Pi_i' = \la \Gamma,P_i\ra, i = 1,\ldots,\beta$. The 
degree of the jacobian of the affine transformation $\bbA^4\to \bbA^4$ defined by $T$ is equal to $4(d-1)$. 
Each plane $\Pi_i$ is blown down to a point, hence it must be taken with multiplicity $2$ in the jacobian. We get 
$4(d-1)-(d-1) -2\alpha-\beta = 3(d-1)-2\alpha-\beta = 0$. This shows that all exceptional surfaces are accounted for.

The line $\Gamma'$ is a common singular line of monoidal surfaces of degree $d'$ from 
the homaloidal web defining the inverse transformation $T^{-1}$. The exceptional surface $J_{d'-1}'$ of $T^{-1}$ is equal 
to the image of the exceptional divisor $E$ under the composition $\Bl_\Gamma(\bbP^3)\to \bbP^3\overset{T}{\da} \bbP^3$.

\begin{example}\label{2-3cremona} Let us take $d = 2, \alpha = 0, \beta = 3$. This gives $d' = 3$.
The transformation $T$ is an example of a quadro-cubic transformation. Its base locus consists of a line $\Gamma$ and 3 
isolated points $P_i$ outside $\Gamma$. The inverse transformation $T^{-1}$ is given by 
monoidal cubic surfaces with singular line $\Gamma'$ that
contains three skew lines $\ell_1,\ell_2,\ell_3$ intersecting $\Gamma'$. The exceptional surfaces of $T$ are the three planes $\la \Gamma,P_i\ra$ and 
the plane $\la P_1,P_2,P_3\ra$. The exceptional surfaces of $T^{-1}$ are the three planes $\la \Gamma',\ell_i\ra$ 
and the unique quadric surface containing the lines $\Gamma',\ell_1,\ell_2,\ell_3$.
 
If  $d = 3$, we can solve for possible $\alpha,\beta$ and obtain Cremona transformations $T$ 
 with $T^{-1}$ of possible algebraic degrees $d' = 2,3,4,5$ (see \cite{Deserti}, \cite{Hudson}).
 \end{example}

\begin{example} Let us take $\alpha = \beta = d-1$. In this case, the algebraic degree of $T^{-1}$ is also equal to $d$.
After we compose the transformation with a projective transformation, we will be able to identify the source and the target $\bbP^3$ and assume
that $\Gamma = \Gamma'$. The restriction of $T$ to each non-exceptional plane $\Pi\in \Gamma^\perp$ is a projective transformation 
that sends $\Pi$ to a plane $\Pi'\in \Gamma^\perp$. One can compose $T$ with another projective transformation that leaves 
$\Gamma$ invariant to obtain a Cremona involution \cite{Montesano}.
\end{example}

\section{Non-degenerate submonoidal surfaces}
\subsection{Conic bundle structure}
Let $\Phi_d$ be a submonoida surface of degree $d$ that contains a 
line $\Gamma$ of multiplicity $d-2$. 
For any general point 
$x\in \bbP^3$, the plane $\Pi  = \la x,\Gamma\ra$ intersects $\Phi_d$ along $\Gamma$ taken with 
multiplicity $d-2$ and a conic. We assume, as before, that $\Gamma$ is given by equations $x_0 = x_1 = 0$. Then 
the equation of  $\Phi_d$ is of the form

\beq\label{eq:Phi}
\begin{split}
&A_{d}(x_0,x_1)+2B_{d-1}(x_0,x_1)x_2+
2C_{d-1}(x_0,x_1)x_3\\
&+D_{d-2}(x_0,x_1)x_2^2+2E_{d-2}(x_0,x_1)x_2x_3
+F_{d-2}(x_0,x_1)x_3^2= 0.
\end{split}
\eeq
We will assume that $\Gamma$ does not contain points of multiplicity $d-1$.  
Otherwise, the surface is monoidal. The equation depends on $(d+1)+2d+3(d-1)-1 = 6d-3$ parameters. The group of projective automorphisms 
of $\bbP^3$ that fix a line is of dimension $11$. Thus we expect that the dimension of the moduli space 
of submonoidal surfaces is equal to $6d-14$.

 As in the previous section, we consider the projective 
bundle 
$\pi:\bbP = \Bl_\Gamma(\bbP^3)\to \Gamma^\perp$ and denote by $\Phi_d^{\bp}$ the proper transform of 
$\Phi_d$ under the blow-up map $\sigma:\Bl_\Gamma(\bbP^3)\to \bbP^3$. 

As in the case of monoidal surfaces, we use the coordinates $(y_0,y_1,y_2,t_0,t_1)$ in 
$\bbP$ to obtain the equation of $\Phi_d^{\bp}$ 
\beq\label{eq:conicbundle}
\begin{split}
&A_{d}(t_0,t_1)y_0^2+2y_0(B_{d-1}(t_0,t_1)y_1+
C_{d-1}(t_0,t_1)y_2)\\
&+D_{d-2}(t_0,t_1)y_1^2+2E_{d-2}(t_0,t_1)y_1y_2
+F_{d-2}(t_0,t_1)y_2^2 = 0.
\end{split}
\eeq
The line $\Gamma^\perp$ has coordinates $(t_0,t_1)$. The fibers of the projection $\pi$ are isomorphic 
to the residual conics 
$C_{[t_0,t_1]}$ in the plane $\Pi_{[t_0,t_1]}\in \Gamma^\perp$ (the common zeros of $D_{d-2},E_{d-2},F_{d-2}$ correspond to conics that contain 
$\Gamma$ as an irreducible component). 

Computing the discriminant of \eqref{eq:conicbundle}, we find that it is equal to 
\beq\label{discriminant} P(t_0,t_1) = \det\begin{pmatrix}A_d&B_{d-1}&C_{d-1}\\
B_{d-1}&D_{d-2}&E_{d-2}\\
C_{d-1}&E_{d-2}&F_{d-2}\end{pmatrix}.
\eeq
It is a binary form of degree $3d-4$. A point $[a,b]$ is a zero of $P(t_0,t_1)$ if and only if 
the fiber of the conic bundle is singular. For a general $\Phi_d$, we expect that the polynomial is reduced, and 
we get $3d-4$ singular fibers. We will clarify this later.

Let 
$\calD_1 = V(P(t_0,t_1))$, considered as a closed subscheme of $\bbP^1 = \Gamma^\perp$. We call it 
the \emph{discriminant} of the conic bundle.

 The intersection 
of $\Phi_d^{\bp}$ with the exceptional divisor $E\cong \Gamma\times \Gamma^\perp \cong \bbP^1\times 
\bbP^1$ is a 
curve $\Sigma$ of bidegree $(d-2,2)$ given by equation
\beq\label{eq:delta}
D_{d-2}(t_0,t_1)y_1^2+2E_{d-2}(t_0,t_1)y_1y_2
+F_{d-2}(t_0,t_1)y_2^2 = 0. 
\eeq
The projections $\sigma:E\to\Gamma$ and $\pi:E\to\Gamma^\perp$ define two projections 
$$p:\Sigma\to \Gamma, \quad q:\Sigma\to \Gamma^\perp$$
of degrees $d-2$ and $2$, respectively.

If $\Sigma$ is smooth, then its genus is equal to $d-3$, and, applying the Hurwitz formula, we expect that the cover 
$p:\Sigma\to \Gamma$ has $4(d-3)$ branch points. They are called the \emph{pinch points}. 
They are the points $x\in \Gamma$ at which a general plane section passing through $x$ has less than 
 $d-2$ local branches at $x$.

Let 
$$R(t_0,t_1): = E_{d-2}(t_0,t_1)^2-D_{d-2}(t_0,t_1)F_{d-2}(t_0,t_1).$$
It is a binary form of degree $2(d-2)$. A point $[a,b]$ is a root of $R(t_0,t_1)$ if and only if it is a branch point of 
the projection $q:\Sigma\to \Gamma^\perp$. Equivalently, $R(a,b) = 0$ if and only 
if the residual conic $C_t$ in the 
plane $\Pi_t$ intersects $\Gamma$ at one point.


Let $\calD_2 = V(R(t_0,t_1))$ considered as a closed subscheme of $\Gamma^\perp$. We call it 
the \emph{small discriminant} of the conic bundle.

\begin{lem}\label{lem:singDelta} A point $x = ([a,b],[c,d])\in \Sigma$ is a singular point of $\Sigma$ if and only if 
$[a,b]$ is a multiple root of $R(t_0,t_1)$. 
\end{lem}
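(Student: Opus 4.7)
The plan is to verify the claim directly via the Jacobian criterion applied to $\Sigma$ viewed as a divisor in $E \cong \bbP^1 \times \bbP^1$. Rewrite the defining equation \eqref{eq:delta} as
$$Q(t_0,t_1,y_1,y_2) = D_{d-2}(t)\,y_1^2 + 2E_{d-2}(t)\,y_1 y_2 + F_{d-2}(t)\,y_2^2,$$
a bihomogeneous form of bidegree $(d-2,2)$, and introduce the symmetric $2\times 2$ matrix
$$M(t) = \begin{pmatrix} D_{d-2}(t) & E_{d-2}(t) \\ E_{d-2}(t) & F_{d-2}(t) \end{pmatrix},$$
so that $Q(t,y)$ is the quadratic form on $y = (y_1,y_2)$ with matrix $M(t)$ and $R(t) = -\det M(t)$. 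Write the coordinates of $x$ as $[a,b] \in \Gamma^\perp$ and $[c,d] \in \Gamma$. Smoothness of $\Sigma$ at $x$ amounts to the non-vanishing of at least one of the four partials $\partial_{y_j} Q$, $\partial_{t_k} Q$ at $x$.

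First I analyze the $y$-gradient. It equals $2M(a,b)\cdot(c,d)^{T}$, and vanishes iff $(c,d)$ lies in the kernel of $M(a,b)$. Since $(c,d) \neq (0,0)$, this forces $\det M(a,b) = 0$, i.e.\ $R(a,b) = 0$. Consequently, if $R(a,b) \neq 0$ then $x$ is automatically smooth, and the statement of the lemma is trivial. So I may restrict attention to roots $[a,b]$ of $R$.

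Assume now $R(a,b) = 0$ and $M(a,b) \neq 0$. The key identity is
$$\frac{\partial R}{\partial t_k}(a,b) = -\mu \cdot \frac{\partial Q}{\partial t_k}(x), \qquad k = 0,1,$$
for some nonzero scalar $\mu$ depending only on the point. One way to derive it is to invoke the product formula $\partial_{t_k}(\det M) = \operatorname{tr}(\operatorname{adj}(M)\cdot \partial_{t_k}M)$ at $(a,b)$, combined with the fact that for a symmetric rank-one $2\times 2$ matrix with kernel spanned by $v = (c,d)^{T}$ the adjugate equals $\mu\,v v^{T}$ with $\mu \neq 0$; alternatively, using $D_{d-2}(a,b)\,c = -E_{d-2}(a,b)\,d$ and $E_{d-2}(a,b)\,c = -F_{d-2}(a,b)\,d$ to eliminate two of $\{D_{d-2},E_{d-2},F_{d-2}\}$ at $(a,b)$ gives the identity by direct substitution. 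Granted this identity, $\partial_{t_k}Q(x)$ vanishes for $k=0,1$ iff $\partial_{t_k}R(a,b)$ vanishes for $k = 0,1$, which is precisely the condition that $[a,b]$ be a multiple root of the binary form $R(t_0,t_1)$.

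The only remaining possibility is the degenerate case $M(a,b) = 0$, i.e.\ $D_{d-2}, E_{d-2}, F_{d-2}$ all vanishing at $(a,b)$. Then $R = E_{d-2}^2 - D_{d-2}F_{d-2}$ visibly has a zero of order at least two at $[a,b]$, and the entire fiber of the projection $q$ over $[a,b]$ is contained in $\Sigma$, so $x$ is automatically singular. The main obstacle in carrying out the plan is the clean derivation of the identity relating $\partial_{t_k}R$ at $[a,b]$ to $\partial_{t_k}Q$ evaluated at $v=(c,d)$; once that is in hand, both directions of the equivalence fall out immediately.
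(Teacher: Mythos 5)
Your argument in the generic case is correct and takes a genuinely different route from the paper. You apply the Jacobian criterion to the bidegree-$(d-2,2)$ divisor $\Sigma=V(Q)$ directly: the $y$-gradient is $2M(a,b)(c,d)^{T}$, which vanishes exactly when $(c,d)$ spans $\ker M(a,b)$ (and this is automatic for a point of $\Sigma$ over a corank-one $[a,b]$, since a rank-one symmetric form vanishes only on its kernel — worth saying explicitly); the $t$-partials are then controlled by Jacobi's formula $\partial_{t_k}\det M=\operatorname{tr}\bigl(\operatorname{adj}(M)\,\partial_{t_k}M\bigr)$ together with $\operatorname{adj}(M(a,b))=\mu\,vv^{T}$, giving $\partial_{t_k}R(a,b)=-\mu\,\partial_{t_k}Q(x)$. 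The paper instead diagonalizes the quadratic form over the discrete valuation ring $\calO_{\bbP^1,[a,b]}$ to the normal form $t^{n}x^2+t^{n+k}y^2$ and reads everything off from $2n+k$; that normal form is reused later for the analysis of the singularities of $\Phi_d^{\bp}$, which is what it buys, while your identity is more intrinsic and avoids the case split in the rank-one situation.

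There is, however, a genuine error in your degenerate case $M(a,b)=0$. If $D_{d-2},E_{d-2},F_{d-2}$ all vanish at $(a,b)$, then $Q=\ell\cdot Q'$ where $\ell$ is the linear form cutting out $[a,b]$ and $Q'$ has bidegree $(d-3,2)$; at a point of the fiber $V(\ell)$ where $Q'\neq 0$ the local equation of the divisor $\Sigma$ is $\ell$ times a unit, so $\Sigma$ is \emph{smooth} there. Concretely, for $Q=t_0t_1(y_1^2+y_2^2)$ one has $R=-t_0^2t_1^2$, so $[1,0]$ is a multiple root, yet at $x=([1,0],[1,0])$ the partial $\partial_{t_1}Q=t_0(y_1^2+y_2^2)$ equals $1$, so $x$ is a smooth point of $\Sigma$. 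Thus the claim ``the entire fiber is contained in $\Sigma$, so $x$ is automatically singular'' is false, and in fact the biconditional of the lemma, read pointwise, fails in this case. What is true — and what the paper's own proof establishes and what is used later (smoothness of $\Sigma$ is equivalent to $R$ having no multiple roots) — is the existential version: over such an $[a,b]$ the curve $\Sigma$ does have a singular point, namely any intersection point of the fiber component $V(\ell)$ with the residual curve $V(Q')$ (there is at least one, since $Q'$ restricted to the fiber is a binary quadratic in $y_1,y_2$). Your write-up should replace the ``automatically singular'' claim by this intersection argument, or note explicitly that the lemma is being proved in the form ``$\Sigma$ is singular over $[a,b]$ iff $[a,b]$ is a multiple root of $R$.''
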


\begin{proof} Here, and in the sequel, we use that a quadratic form over a discrete valuation ring of 
characteristic $\ne 2$ 
can be reduced to a 
sum of squares \cite[Satz (15.1)]{Kneser}. By localizing, we may assume that 
the quadratic form \eqref{eq:delta} over $A = \calO_{\bbP^1,[a,b]}$ with local parameter $t$ is given by 
$t^nx^2+t^{n+k}y^2$. If $n > 0$, then $[a,b]\in \Sing(\calD_2)$ and $\Sigma$ contains the point 
$([a,b],[c,d])$, where $c^2+t^kd^2 = 0$. It is a singular point of $\Sigma$. If $n = 0$ and $k > 1$, then 
$[a,b]\in \Sing(\calD_2)$ and 
the point $t = x= 0, y= 1$ is a singular point.

Conversely, if $[a,b]\in \Sing(\calD_2)$, then $2n+k \ge 2$, and we can reverse the argument to show that 
$\Sigma$ 
is singular over $t = 0$ if and only if $2n+k\ge 2$.
\end{proof}


All singular points of  $\Phi_d$ outside $\Gamma$ define singular points of the 
conic bundle $\Phi_d^{\bp}$. Other singular points of $\Phi_d^{\bp}$ may lie on the curve $\Sigma$.
 
Taking partials of \eqref{eq:conicbundle} and setting $y_0 = 0$, we obtain the following.

\begin{lem}\label{lem:singDelta} The following assertions are equivalent:
\begin{itemize}
\item[(i)] The rank of the matrix
\beq\label{matrixdelta}
\begin{pmatrix} B_{d-2}(a,b)& D_{d-2}(a,b)&E_{d-2}(a,b)\\
C_{d-2}(a,b)&E_{d-2}(a,b)&F_{d-2}(a,b)
\end{pmatrix}
\eeq
at the point $[a,b]$ is equal to one.
\item[(ii)] The residual conic $C_{a,b}$ over a point 
$(a,b)\in \calD_2$ has a singular point  
$[0,0,-E_{d-2}(a,b),D_{d-2}(a,b)] = [0,0,-F_{d-2}(a,b),E_{d-2}(a,b)]\in \Gamma$.
\item[(iii)] A point on $\Sigma$ over $[a,b]$ is a singular point of a fiber of the conic bundle.
\end{itemize}
If $d\ge 4$, then these conditions are also equivalent to the condinition
\begin{itemize}
\item[(iv)] The surface $\Phi_d^{\bp}$ has a singular point on $\Sigma$ over $[a,b]\in \calD_2$.
\end{itemize}
\end{lem}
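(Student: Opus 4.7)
The plan is to compute the partial derivatives of the defining equation \eqref{eq:conicbundle} at a point $p$ with $y_0 = 0$ and verify that the four conditions correspond to progressively stronger vanishing statements. Setting $y_0 = 0$, the three fiber partials simplify to
\[
\tfrac{1}{2}\partial_{y_0}G = B_{d-1}y_1 + C_{d-1}y_2, \quad \tfrac{1}{2}\partial_{y_1}G = D_{d-2}y_1 + E_{d-2}y_2, \quad \tfrac{1}{2}\partial_{y_2}G = E_{d-2}y_1 + F_{d-2}y_2,
\]
viewed as linear forms in $(y_1, y_2)$ with coefficients evaluated at $(a, b)$. The fiber $C_{a,b}$ has a singular point with $y_0 = 0$ iff these three forms share a non-trivial common zero, iff the $3 \times 2$ coefficient matrix has rank $\le 1$. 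This matrix is the transpose of the one in (i) (modulo the evident typo $B_{d-2}, C_{d-2} \leftrightarrow B_{d-1}, C_{d-1}$), establishing (i) $\Leftrightarrow$ (iii). Reading off the common zero from the last two equations identifies it as $[y_1, y_2] = [-E_{d-2}, D_{d-2}] = [-F_{d-2}, E_{d-2}]$; these two representatives agree precisely when $E_{d-2}^2 = D_{d-2} F_{d-2}$, i.e.\ when $R(a, b) = 0$, which is the hypothesis $[a, b] \in \calD_2$. This yields (ii).

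For the equivalence with (iv), the direction (iv) $\Rightarrow$ (iii) is immediate, since any singular point of $\Phi_d^{\bp}$ must have all partials vanishing, in particular the $y$-partials, giving a singular point of the fiber. For (iii) $\Rightarrow$ (iv) I would verify that the base partials $\partial_{t_j} G$ also vanish at $p = ([0, -F_{d-2}(a,b), E_{d-2}(a,b)], [a, b])$. At $y_0 = 0$, direct computation gives
\[
\partial_{t_j} G\big|_{p} = (\partial_{t_j} D_{d-2}) F_{d-2}^2 - 2(\partial_{t_j} E_{d-2})\, E_{d-2} F_{d-2} + (\partial_{t_j} F_{d-2}) E_{d-2}^2,
\]
with all forms evaluated at $(a, b)$. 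The key identity is obtained by differentiating $E_{d-2}^2 = D_{d-2} F_{d-2}$ at $(a, b)$, which yields $2E_{d-2}\partial_{t_j} E_{d-2} = F_{d-2}\partial_{t_j}D_{d-2} + D_{d-2}\partial_{t_j}F_{d-2}$; a one-line substitution of $2\partial_{t_j}E_{d-2}\cdot E_{d-2}F_{d-2}$ by $F_{d-2}^2\partial_{t_j}D_{d-2} + D_{d-2}F_{d-2}\partial_{t_j}F_{d-2}$ and using $D_{d-2}F_{d-2} = E_{d-2}^2$ once more collapses the expression to zero.

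The main obstacle, though essentially routine, is bookkeeping of projective points: one must confirm that the two representatives of the singular point in (ii) really agree as projective points (this needs $R(a, b) = 0$ together with the fact that $D_{d-2}, E_{d-2}, F_{d-2}$ are not simultaneously zero at $(a, b)$, which follows from rank $\ge 1$ of the matrix in (i)), and handle the degenerate case $F_{d-2}(a, b) = 0$ by swapping to the representative $[-E_{d-2}(a, b), D_{d-2}(a, b)]$, where the analogous Leibniz cancellation applies. The hypothesis $d \ge 4$ guarantees that $D_{d-2}, E_{d-2}, F_{d-2}$ are forms of positive degree so that the base derivatives $\partial_{t_j}$ are genuinely non-trivial and $\Gamma$ is a line of multiplicity $d - 2 \ge 2$ in $\Phi_d$, placing us squarely in the submonoidal regime.
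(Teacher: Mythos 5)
Your handling of (i) $\Leftrightarrow$ (ii) $\Leftrightarrow$ (iii) is correct and is essentially the paper's own argument: at $y_0=0$ the three $y$-partials of \eqref{eq:conicbundle} become linear forms in $(y_1,y_2)$ whose coefficient matrix is the transpose of \eqref{matrixdelta}, and reading off the common kernel produces the point in (ii). (Strictly, a common zero exists iff the rank is $\le 1$; the rank-zero case, where the residual conic contains $\Gamma$, is tacitly excluded by both you and the paper.)

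The step (iii) $\Rightarrow$ (iv) is where your proof breaks down. Your ``key identity'' $2E_{d-2}\partial_{t_j}E_{d-2} = F_{d-2}\partial_{t_j}D_{d-2} + D_{d-2}\partial_{t_j}F_{d-2}$ is obtained by differentiating $E_{d-2}^2 = D_{d-2}F_{d-2}$, but this relation holds only at the single point $(a,b)$ (it is the condition $R(a,b)=0$), not as an identity of binary forms, so it cannot be differentiated. What your computation actually gives, using $D_{d-2}F_{d-2}=E_{d-2}^2$ at $(a,b)$ once, is
\[
\partial_{t_j}G\big|_{p} \;=\; (\partial_{t_j}D_{d-2})F_{d-2}^2 - 2(\partial_{t_j}E_{d-2})E_{d-2}F_{d-2} + (\partial_{t_j}F_{d-2})E_{d-2}^2 \;=\; -F_{d-2}(a,b)\,\partial_{t_j}R(a,b),
\]
so the base partials vanish at $p$ if and only if $[a,b]$ is a critical point of $R$, i.e.\ a multiple root of $R$ --- an extra condition not implied by (i)--(iii). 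The implication genuinely fails without it: take $d=4$, $A_4=t_0^4$, $B_3=t_1^3$, $C_3=t_0^3$, $D_2=t_0t_1$, $E_2=0$, $F_2=t_1^2-t_0^2$, and $[a,b]=[1,0]$, a simple root of $R=-t_0t_1(t_1^2-t_0^2)$. The matrix \eqref{matrixdelta} is $\left(\begin{smallmatrix}0&0&0\\1&0&-1\end{smallmatrix}\right)$ of rank one, the fiber is the line pair $y_0^2+2y_0y_2-y_2^2=0$ with vertex $[0,1,0]\in\Sigma$, yet $\partial_{t_1}G = 6t_1^2y_0y_1+t_0y_1^2+2t_1y_2^2 = 1\ne 0$ there, so $\Phi_4^{\bp}$ is smooth at that point. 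This is exactly the case $(n,m)=(0,1)$ in the proof of Proposition \ref{sarkisov}: over a simple root of the discriminant the total space is regular while the fiber is a line pair, and nothing prevents its vertex from lying on $\Gamma$. The correct supplement (combining with the first lemma of the section) is that a fiber-singular point on $\Sigma$ is a singular point of $\Phi_d^{\bp}$ iff it is a singular point of $\Sigma$, iff $[a,b]$ is a multiple root of $R$. Be aware that the paper's own one-line justification of (iv) does not address this either, so the discrepancy is with the statement itself rather than only with your argument; but as written your derivation of the vanishing of the $t$-partials is not valid.
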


\begin{proof} We only prove that (i) implies (ii) and leave to prove other implications to the reader. 
Suppose (i) holds. then $D_{d-2}(a,b)F_{d-2}(a,b)-E_{d-2}(a,b)^2 = 0$, hence $[a,b]\in \calD_2$. Let $(0,y_1,y_2)$ be the 
coordinates of the intersection point of the fiber of conic bundle $K_{a,b}$ over $[a,b]$ with $\Sigma$. 
Then $y_0B_{d-2}(a,b) = y_0C_{d-2}(a,b) = 0$. If $B_{d-2}(a,b)\ne 0$ or $C_{d-2}(a,b)\ne 0$, then $y_0 = 0$, and hence $[0,y_1,y_2]$ 
is a singular point of $K_{a,b}$. If $B_{d-2}(a,b)= C_{d-2}(a,b) = 0$, 
then $P(a,b) = A_d(a,b)(D_{d-2}(a,b)F_{d-2}(a,b)-E_{d-2}(a,b)^2) = 0$ 
vanishes again. Thus the conic has equation $y_0^2A_d(a,b)+(E_{d-2}(a,b)y_1+F_{d-2}(a,b)y_2)^2 = 0$, hence again the point 
$(0,y_1,y_2)$ is its singular point.

If $d\ge 3$, taking the partials in $y_0,y_1,y_2,t_0,t_1$ and using that $D_{d-2},E_{d-2}$ and $F_{d-2}$ are  binary 
forms of degree $\ge 2$, we get condition (iv).
\end{proof} 

By analogy with the case of cubic surfaces, a singular point of a residual conic lying on 
$\Gamma$ may be called an \emph{Eckardt point} of $\Phi_d$.

 \begin{prop} $\Phi_d^{\bp}$ is a normal surface if and only if $P(t_0,t_1) \ne 0$ and $\Phi_d$ has no singular lines intersecting 
 $\Gamma$ (besides 
 $\Gamma$).
 \end{prop}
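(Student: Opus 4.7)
The plan is to apply Serre's normality criterion. Since $\Phi_d^{\bp}$ is a Cartier divisor in the smooth threefold $\bbP = \Bl_\Gamma(\bbP^3)$ (its total transform satisfies $\sigma^*\Phi_d = \Phi_d^{\bp}+(d-2)E$, so $\Phi_d^{\bp}$ is Cartier), it is Cohen--Macaulay and hence satisfies $S_2$, so it is normal if and only if it is regular in codimension one, i.e.\ if and only if $\Sing(\Phi_d^{\bp})$ is $0$-dimensional. The task therefore reduces to showing that $\Sing(\Phi_d^{\bp})$ is finite exactly when the two stated conditions hold.

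For the necessary direction I would argue the contrapositive. If $P(t_0,t_1) \equiv 0$, then every fiber of the conic bundle $\pi : \Phi_d^{\bp} \to \Gamma^\perp$ is a singular conic; the simultaneous vanishing of the partials of \eqref{eq:conicbundle} in the bundle coordinates $y_0, y_1, y_2$ cuts out a subscheme that dominates $\Gamma^\perp$, hence is one-dimensional, and it lies in $\Sing(\Phi_d^{\bp})$, so $\Phi_d^{\bp}$ is not normal. Similarly, if $\Phi_d$ admits a singular line $L \neq \Gamma$ meeting $\Gamma$, then its proper transform $L^{\bp} \subset \Phi_d^{\bp}$ is a curve on which $\sigma$ is a local isomorphism at the generic point, so the singularity of $\Phi_d$ along $L$ transfers to a one-dimensional singular locus of $\Phi_d^{\bp}$ along $L^{\bp}$.

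For the sufficient direction, assume $P \not\equiv 0$ and that no line of $\Sing(\Phi_d)$ other than $\Gamma$ meets $\Gamma$. The essential local fact is that for every plane $\Pi \in \Gamma^\perp$ not contained in $\Phi_d$, the restriction of the defining equation factors as $F|_\Pi = L_\Pi^{d-2} Q_\Pi$, where $L_\Pi$ cuts out $\Gamma$ in $\Pi$ and $Q_\Pi$ cuts out the residual conic $C_\Pi$; a direct partial-derivative computation then shows that for $x \in C_\Pi \setminus \Gamma$ the surface $\Phi_d$ is singular at $x$ only if $C_\Pi$ is singular at $x$. Hence any irreducible curve $C \subset \Sing(\Phi_d)$ distinct from $\Gamma$ either projects dominantly to $\Gamma^\perp$ under $\pi$, which forces every residual conic to be singular and therefore $P \equiv 0$, or lies in a single plane $\Pi_0 \in \Gamma^\perp$, where it must be a line-component of $C_{\Pi_0}$ meeting $\Gamma$ by B\'ezout inside $\Pi_0$; both alternatives contradict the hypotheses, so $\Sing(\Phi_d) \setminus \Gamma$ is finite and, since $\sigma$ restricts to an isomorphism $\Phi_d^{\bp}\setminus E \cong \Phi_d \setminus \Gamma$, so is $\Sing(\Phi_d^{\bp}) \setminus E$. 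To control $\Sing(\Phi_d^{\bp}) \cap E \subset \Sigma$ I would invoke Lemma \ref{lem:singDelta}: singular points of $\Phi_d^{\bp}$ on $\Sigma$ correspond to rank one of a certain $2 \times 3$ matrix of binary forms, and one checks that if this rank drop persisted along an entire irreducible component of $\Sigma$, then the associated $2\times 2$ minor relations globally propagate to $P \equiv 0$, again contradicting the assumption.

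The main obstacle I anticipate is the last step on $E$: translating the pointwise rank-one condition of Lemma \ref{lem:singDelta} into a global identity of binary forms that forces $P \equiv 0$, and simultaneously ruling out the possibility that a component of $\Sigma$ contracted by $\pi$ (corresponding to a common zero of $D_{d-2}, E_{d-2}, F_{d-2}$) contributes an unwanted one-dimensional piece to $\Sing(\Phi_d^{\bp})$ not already captured by the previously handled cases.
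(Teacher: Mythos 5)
Your proposal is correct and follows essentially the same route as the paper: both reduce normality of the hypersurface $\Phi_d^{\bp}$ to finiteness of its singular locus, show that a one-dimensional component of $\Sing(\Phi_d)$ off $\Gamma$ either dominates $\Gamma^\perp$ (forcing $P\equiv 0$) or is a line in some $\Pi_0\in\Gamma^\perp$ meeting $\Gamma$, and then dispose of a putative singular curve inside $E$ by showing the rank-one condition of Lemma \ref{lem:singDelta}, holding identically along a component of $\Sigma$, forces $P\equiv 0$. The one step you flag as your anticipated obstacle --- a component of $\Sigma$ contracted by $\pi$, where the rank condition is only imposed at a single point of $\Gamma^\perp$ --- is precisely the point the paper's own proof also passes over, by asserting that the rank of the matrix \eqref{matrixdelta} is \emph{identically} equal to $1$.
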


\begin{proof} Since $\Phi_d^{\bp}$ is a hypersurface in a smooth variety, $\Phi_d^{\bp}$ is normal if and only if 
it has only isolated singular points. 

Suppose $\calD_1\ne \Gamma^\perp$, i.e. $P(t_0,t_1) \ne 0$. Thus a general residual conic is smooth. 
Taking a general plane section of $\Phi_d$, we obtain 
that $\Sing(\Phi_d\setminus \Gamma)$ consists
of isolated singular 
points or lines intersecting $\Gamma$. By the assumption, $\Phi_d$ does not have such lines. 
So, if $\Phi_d^{\bp}$ is non-normal, the curve $\Sigma$ must be singular along its irreducible component.
By Lemma \ref{lem:singDelta}, the rank of matrix \eqref{matrixdelta} is identically equal to $1$. This implies that 
the rank of matrix \eqref{discriminant} is equal to 2, hence $P(t_0,t_1) = 0$, a contradiction.

Conversely, suppose $\Phi_d^{\bp}$ is normal. Then a general residual conic is irreducible, hence its proper transform 
in $\Phi_d^{\bp}$ is irreducible, hence smooth. Thus $P(t_0,t_1)\ne 0$. The second assumption is obviously satisfied.
\end{proof}



\begin{prop}\label{sarkisov} 
$\Phi_d^{\bp}$ is smooth if and only if the discriminant polynomial $P(t_0,t_1)$ has no multiple roots. 
\end{prop}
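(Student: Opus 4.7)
The plan is to apply the Jacobian criterion to \eqref{eq:conicbundle} pointwise on the projective bundle $\bbP$ and translate the vanishing of the partial derivatives at points over a zero of $P(t_0,t_1)$ into information about the vanishing order of $P$. A smooth point of a residual conic $C_{[a:b]}$ is automatically a smooth point of $\Phi_d^{\bp}$, because the partials in the fiber directions already generate a rank-$1$ subspace there. Hence the analysis reduces to the singular points of the singular fibers, and these split according to whether the symmetric $3 \times 3$ matrix in \eqref{discriminant} has rank $2$ or rank $1$ at $(a,b)$. Throughout, write $a(t),b(t),c(t),d(t),e(t),f(t)$ for its entries in the ordering $(1,1),(2,2),(3,3),(1,2),(1,3),(2,3)$, viewed as functions of an affine parameter $t$ on $\Gamma^\perp$ near a chosen zero $t = 0$.

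\textbf{Rank $2$ case.} If the fiber over $t=0$ has rank $2$, its unique node $\xi_0$ can be placed at the origin of an affine chart on $\bbP$ so that $c(0)=e(0)=f(0)=0$ and $a(0)b(0) - d(0)^2 \neq 0$. Cofactor expansion of $P(t)$ along the third row and column then yields
\[
P'(0) = c'(0)\bigl(a(0)b(0) - d(0)^2\bigr),
\]
while the Jacobian of the defining equation at $\xi_0$ reduces to $(0,0,c'(0))$. Therefore $\xi_0$ is a smooth point of $\Phi_d^{\bp}$ if and only if $P$ has a simple root at $[a:b]$. The same cofactor estimate shows that if the matrix has rank $1$ at $(a,b)$, then $P$ vanishes to order at least $2$ there; equivalently, a simple root of $P$ forces the fiber to be a rank-$2$ conic. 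This already establishes the backward implication: when $P$ has no multiple roots, every singular fiber is a nodal conic whose node is smooth on $\Phi_d^{\bp}$, and every other fiber point is smooth by the opening remark.

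\textbf{Rank $1$ case.} For the forward implication it remains to treat a zero $[a:b]$ of $P$ at which the matrix has rank $1$, in which case the fiber is a double line $L$. Normalize coordinates so that $a(0)\neq 0$ and the other five entries vanish at $t=0$, so $L = \{u=0\}$ in the fiber. Completing the square in $u$ (permissible since $a(0)\neq 0$) rewrites the equation locally as $a(t)U^2 + t\,G(v,t) = 0$, and a routine Taylor expansion gives
\[
G(v,0) = b'(0)v^2 + 2f'(0)v + c'(0).
\]
A direct Jacobian computation shows that a point $(0,v_0,0)\in L$ is singular on $\Phi_d^{\bp}$ precisely when $G(v_0,0)=0$. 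Homogenized as a binary quadratic form on $L\cong\bbP^1$, this expression has a root in $L$ unless it vanishes identically; in the latter case $L$ itself is contained in the singular locus of $\Phi_d^{\bp}$, contradicting normality and hence the preceding proposition. Combining the two cases, any multiple root of $P$ produces a singular point of $\Phi_d^{\bp}$, completing the forward direction.

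The main technical point is the rank-$1$ analysis, where one has to exclude an identically vanishing binary quadratic by invoking the normality result rather than smoothness directly; the remaining content is the cofactor identity for $P'(0)$ in Step~1 and the standard square-completion in Step~2.
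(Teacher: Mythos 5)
Your proof is correct, and while it shares the paper's basic strategy --- a local analysis of the conic bundle at a degenerate fiber --- the execution is genuinely different. The paper localizes at a point of $\Gamma^\perp$ and invokes the diagonalization of quadratic forms over a discrete valuation ring (Kneser) to put the equation in the normal form $u^2+\epsilon_1t^nv^2+\epsilon_2t^mw^2$; the discriminant then has order $n+m$ and regularity is read off by inspecting the cases $(n,m)$. You instead stratify the zeros of $P$ by the rank of the symmetric matrix in \eqref{discriminant} and apply the Jacobian criterion directly, using the cofactor identity $P'(0)=c'(0)\bigl(a(0)b(0)-d(0)^2\bigr)$ and the vanishing of the adjugate of a rank-one matrix in place of the normal form, plus a partial completion of the square for the double-line fibers. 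Your route is more elementary in that it avoids the diagonalization lemma over the DVR, at the cost of more computation; the paper's normal form has the further advantage of powering the finer classification of the singularities (types $A_k$, $D_k$) carried out immediately after the proposition. Two small points. First, you should dispose of the rank-zero case in one line: if all six entries of the matrix vanished at $[a:b]$, the linear form $bt_0-at_1$ would divide every coefficient of \eqref{eq:conicbundle}, contradicting the irreducibility of $\Phi_d$. Second, in the rank-one case the appeal to normality is both unnecessary and not actually available, since normality is not a hypothesis of the proposition; but nothing is lost, because if $G(\cdot,0)$ vanishes identically then the entire double line lies in $\Sing(\Phi_d^{\bp})$, which already supplies the singular point needed for the forward implication.
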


\begin{proof} This can be deduced from the well-know properties of conic bundles over any regular base 
(see \cite{Sarkisov}). In our situation, when the base is one-dimensional, the proof is very easy and 
we will present it here.
After localizing at a point on the base, we may assume that $X\subset \bbP_R^2$, where $R$ is a discrete valuation 
ring with local parameter $t$. We use again that a quadratic form over $R$ can be 
reduced to a sum of squares. Thus $\Phi_d$ can be given by an equation 
\beq\label{kneser}
u^2+\epsilon_1t^nv^2+\epsilon_1 t^{m}w^2 = 0,
\eeq
where $\epsilon_i$ are units and $n\le m$. Taking the partials, we see that $(t;u,v,w) = [0,0,0,1]$ is 
a non-regular point of $\Phi_d$ if 
$m > 1$. Thus, if  $\Phi_d$ is regular, the discriminant $D = \epsilon_1\epsilon_2t^{n+m}$ is of order $\le 2$.
Conversely, if $D$ is of order $1$, then $(n,m) = (0,1)$, then $\Phi_d$ is regular and the fiber of $t = 0$ is 
a line-pair. If the order of $D$ is equal to 2, then $(n,m) = (1,1)$ or $(0,2)$. In the first case $\Phi_d$ is non-regular 
at $[0,0,v_0,w_0]$, where $\epsilon_1(0)u_0^2+ \epsilon_2(0)v_0^2 = 0$ and the fiber 
over $t = 0$ is a double line $u^2 = 0$. In the second case $x = [0,0,0,1]$ is a non-regular point of $\Phi_d$, 
 the fiber over $t = 0$ 
is a line-pair with singular point at $x$. 
\end{proof}

\begin{remark} In the classical terminology, the image in $\Phi_d$ of a non-reduced  fiber of the conic bundle 
$\Phi_d^{\norm}$ is 
 a \emph{torsal line}. In general, a torsal line is a line on a surface with the same tangent plane at each 
 smooth point of the surface lying on the line. 
\end{remark}  

Using local equations \eqref{kneser}, we can analyze possible singularities of $\Phi_d$.

Suppose $n = 0$, then the singular point is formally isomorphic to the singularity of $xy+z^{m} = 0$ at the origin.
It is a rational double point of type $A_{m-1}$.  Note that in this case the point is a singular point of a reduced conic fiber.

Suppose $n > 0$. Then the fiber over $t= 0$ is a double line. If $n = m = 1$ (and hence $[a,b]$ is a double root of $P(t_0,t_1)$), we have two ordinary double points 
on  it. If $n > 1$, $[a,b]$ is a root of multiplicity $> 2$ of $P(t_0,t_1)$ and the surface is non-normal. 
If $1\le n< m$, the singularity is formally isomorphic to the singularity of
 $x^2+t^ny^2+t^{n+k} = 0, k > 0,$ at the origin. If $k$ is odd, the singularity is quasi-homogeneous with weights 
 $(q_1,q_2,q_3) = (n+k,2,k)$ and degree $d = 2n+2k$. It is known that 
 it is a rational double point if and only if $d-q_1-q_2-q_3 = n-2< 0$. When $n = 0$, it is an ordinary node if and only if 
 $k = 2$ (and hence 
 $[a,b]$ is a root of multiplicity $2$ of $P(t_0,t_1)$). Otherwise it is a rational double point of type $A_{k-1}$.
 If $n = 1$, it is a rational double point of type $D_{n+k+1}$.
 
\begin{definition} We say that $\Phi_d$ is non-degenerate if the following conditions are satisfied:
\begin{itemize}
\item[(i)] $\Phi_d^{\bp} = \Phi_d^{\norm}$;
\item[(ii)]$P(t_0,t_1)$ has no roots of multiplicity larger than $2$;
\item[(iii)] $R(t_0,t_1)$ has no multiple roots.
\end{itemize} 
\end{definition}

The following proposition follows from the previous analysis of possible singularities of $\Phi_d$.

\begin{prop}\label{prop:nondegenerate} Suppose $\Phi_d$ is non-degenerate. Then  
\begin{itemize}
\item[(i)] the curve $\Sigma$ is a smooth curve of genus $d-3$;
\item[(ii)] no singular point of $\Phi^{\norm}$ lies on $\Sigma$;
\item[(iii)] all isolated singular points of $\Phi_d$ are ordinary nodes;
\item[(iv)] every singular point of $\Phi_d^{\norm}$ is a singular point of some fiber of the conic bundle; 
\item[(v)] each double fiber of the conic bundle on $\Phi_d^{\norm}$ has two singular points.
\end{itemize}
Conversely, properties (i), (ii) and (iii) imply that $\Phi_d$ is non-degenerate.
\end{prop}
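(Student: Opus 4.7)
I would prove the forward direction (non-degeneracy implies (i)--(v)) by combining the local normal form \eqref{kneser} from the proof of Proposition \ref{sarkisov} with the two lemmas labelled \ref{lem:singDelta}, and then verify the converse by essentially reversing these implications.

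For (i), smoothness of $\Sigma$ is immediate from the first Lemma \ref{lem:singDelta} together with non-degeneracy (iii); the genus $d-3$ then follows by adjunction on $\bbP^1\times\bbP^1$ for a smooth curve of bidegree $(d-2,2)$. For (iii), (iv), (v), I would invoke the singularity classification worked out right after Proposition \ref{sarkisov}: non-degeneracy (ii) forces the normal-form exponents to satisfy $n+m\le 2$, leaving only the cases $(n,m)=(0,2)$ (a single node on a reduced line-pair fiber) and $(n,m)=(1,1)$ (two nodes on a double-line fiber). Both are ordinary nodes, proving (iii); each sits on a singular conic fiber, giving (iv); and in the case $(n,m)=(1,1)$ the fiber $u^2+\epsilon_1 tv^2+\epsilon_2 tw^2=0$ has the two nodes $u=t=0,\ \epsilon_1 v^2+\epsilon_2 w^2=0$, giving (v). The delicate point is (ii): a singular point of $\Phi_d^{\bp}$ on $\Sigma$ is, by the second Lemma \ref{lem:singDelta}, an Eckardt point at which the rank-$1$ condition holds. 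Working in local analytic coordinates adapted so that $E$ passes through this point and intersecting the normal form with (a smooth divisor representing) $E$, one reads off that $\Sigma=\Phi_d^{\bp}\cap E$ itself acquires a node. By the first Lemma \ref{lem:singDelta} this forces $[a,b]$ to be a multiple root of $R$, contradicting non-degeneracy (iii).

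For the converse, property (iii) (together with the hypothesis $\Phi_d$ contains no singular line meeting $\Gamma$ other than $\Gamma$) gives finiteness of $\Sing(\Phi_d)\setminus\Gamma$, so by the proposition preceding \ref{sarkisov} the surface $\Phi_d^{\bp}$ is normal and hence coincides with $\Phi^{\norm}$, yielding non-degeneracy (i). Smoothness of $\Sigma$ from (i) combined with the first Lemma \ref{lem:singDelta} yields non-degeneracy (iii). Finally, a root of $P$ of multiplicity $\ge 3$ would, by the local classification, force either a non-normal point (case $n\ge 2$, contradicting (i)) or a rational double point of type $D_{k+2}$ with $k\ge 1$ (case $n=1,\ m\ge 2$), which is not an ordinary node, contradicting (iii); this gives non-degeneracy (ii). The main obstacle throughout is the bookkeeping in part (ii): one must carefully track how $E$ sits inside the diagonalized normal form, since completing the square in $y_0$ mixes it with $y_1$ and $y_2$. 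A clean way to handle this is to pass to the formal completion at the putative Eckardt point and use the rank-$1$ relations $DF=E^2$, $BE=CD$, $BF=CE$ from Lemma \ref{lem:singDelta} to compute the leading partials of $R$ directly, producing the multiple root of $R$ that contradicts non-degeneracy (iii).
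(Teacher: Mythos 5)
Your proof is correct and follows essentially the same route as the paper: everything is reduced to the two Lemmas~\ref{lem:singDelta} and to the local normal form $u^2+\epsilon_1t^nv^2+\epsilon_2t^mw^2$ analyzed after Proposition~\ref{sarkisov}, with the genus of $\Sigma$ coming from adjunction for a $(d-2,2)$-curve on $\Gamma\times\Gamma^\perp$. Two small remarks. For (ii) you are working harder than necessary: there is no need to diagonalize or to track how $E$ sits inside the normal form, because $E$ is a smooth divisor of $\Bl_\Gamma(\bbP^3)$ and $\Sigma=\Phi_d^{\bp}\cap E$, so at any singular point of $\Phi_d^{\bp}$ lying on $E$ the differential of the defining equation restricted to $E$ vanishes; such a point is therefore automatically a singular point of $\Sigma$, contradicting (i) directly --- this is the paper's one-line argument. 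In the converse, your dichotomy for a root of $P$ of multiplicity $\ge 3$ omits the case $n=0$, $m\ge 3$, which yields an $A_{m-1}$-singularity with $m-1\ge 2$; it is also excluded by (iii), so nothing breaks, but the case should be listed. Finally, the ``hypothesis'' that $\Phi_d$ has no singular line meeting $\Gamma$ is not among (i)--(iii) and should not be assumed separately: it follows from (i), since such a line would give a singular point of $\Phi_d^{\bp}$ on $E$ and hence a singular point of $\Sigma$.
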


\begin{proof} Suppose that $\Phi_d$ is non-degenerate. Property (i) follows from the definition and 
Lemma \ref{lem:singDelta}. If $x$ is singular point of $\Phi_d^{\norm}$ lying on $\Sigma$, then the exceptional divisor 
$E$ of the blow-up of $\Gamma$ intersects $\Sigma$ at a singular point of $\Phi_d$, hence this point must be a singular point of 
$\Sigma$ contradicting (i).

(iii) Since $P(t_0,t_1)$ has no roots of multiplicity larger than $2$, the analysis 
of possible singular points of $\Phi_d$ in the proof of Proposition \ref{sarkisov} shows that all singular points of 
$\Phi^{\norm}$ are ordinary nodes. This shows that (iii) follows from (ii).

(iv) Follows from the analysis of singularities of $\Phi_d$ in the proof of Proposition \ref{sarkisov}.

Let us prove the converse. Property (i) implies that $\Sigma$ is reduced and hence 
$\Phi_d^{\bp} = \Phi_d^{\norm}$. It also implies 
that $R(t_0,t_1)$ has no multiple roots.  Properties (ii) and (ii), together 
with the analysis of singularities of $\Phi_d$ from Proposition \ref{sarkisov}, imply that the polynomial $P(t_0,t_1)$ has no roots 
of multiplicity higher than $2$.
\end{proof}

 \begin{remark} One can show that, only assuming that $\Phi_d^{\bp} = \Phi_d^{\norm}$, all singularities of $\Phi_d$ are rational double  
 points.
 \end{remark}

\begin{corollary} Suppose $d > 3$. The number of isolated singular points on a non-degenerate  surface $\Phi_d$ 
is at most $3d-4$.
\end{corollary}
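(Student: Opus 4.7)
The plan is to bound the isolated singular points by counting the roots of the discriminant polynomial $P(t_0,t_1)$ of the conic bundle, which by \eqref{discriminant} is a binary form of degree $3d-4$. By non-degeneracy condition (ii), every root of $P$ has multiplicity at most $2$; writing $a$ for the number of simple roots and $b$ for the number of double roots, one obtains the identity
\[
a+2b = 3d-4.
\]

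Next I would transfer the count from $\Phi_d$ to its normalization. Since the blow-up $\sigma\colon \Bl_\Gamma(\bbP^3)\to \bbP^3$ is an isomorphism away from $\Gamma$ and, by Proposition \ref{prop:nondegenerate}(ii), no singular point of $\Phi_d^{\norm}$ lies on $\Sigma$, the isolated singular points of $\Phi_d$ (those outside $\Gamma$) correspond bijectively to the singular points of $\Phi_d^{\norm}$. By Proposition \ref{prop:nondegenerate}(iv), each of the latter projects under $\pi$ to a root of $P$ in $\Gamma^\perp$, so it suffices to bound the number of singular points of $\Phi_d^{\norm}$ lying over each root of $P$.

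The decisive step is the local analysis from the proof of Proposition \ref{sarkisov}: a simple root of $P$ corresponds to local type $(n,m)=(0,1)$, where $\Phi_d^{\norm}$ is regular and so contributes no singular point; a double root corresponds to $(0,2)$ or $(1,1)$, contributing one or two singular points respectively. Hence each double root contributes at most two singular points of $\Phi_d^{\norm}$, and summing gives
\[
\#\{\text{isolated singular points of }\Phi_d\} \le 2b \le a+2b = 3d-4.
\]

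There is no serious obstacle — the counting is immediate once the preceding machinery is in place. The one subtlety worth double-checking is that the bijection between isolated singular points of $\Phi_d$ and singular points of $\Phi_d^{\norm}$ really holds, which is exactly what non-degeneracy conditions (i) and (ii) are designed to guarantee; the hypothesis $d>3$ is used to invoke part (iv) of Lemma \ref{lem:singDelta} in the identification of the local model.
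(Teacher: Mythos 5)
Your proof is correct and follows essentially the same route as the paper: the paper's (very terse) argument is precisely that every isolated singular point lies over a multiple root of the degree-$(3d-4)$ discriminant $P(t_0,t_1)$, with at most two singular points over a given root (occurring only when the fiber is non-reduced), which yields the bound $2b\le a+2b=3d-4$. Your write-up just makes explicit the transfer to $\Phi_d^{\norm}$ and the local case analysis from Proposition \ref{sarkisov} that the paper leaves implicit.
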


\begin{proof} Every singular point lies over a singular point of the discriminant $\calD_1$, 
and two singular points may lie over the same point if the corresponding fiber is not reduced. This gives 
the asserted bound. 

\end{proof}

\subsection{$4$-nodal cubic surfaces}
A normal cubic surface $\Phi_3$ with a fixed line $\Gamma$ on it is a monoidal surface. 

 Each reducible residual conic is cut out by a tritangent plane 
containing the line $\Gamma$. The surface is smooth if and only if there are 5 different 
tritangent planes corresponding to the roots of the binary form $P(t_0,t_1)$ of degree 
$3d-4 = 5$. The roots 
of the binary form $R(t_0,t_1)$ of degree 2 correspond to the residual conics intersecting $\Gamma$ at one point.
The common roots of $P(t_0,t_1)$ and $R(t_0,t_1)$ correspond to Eckardt points on $\Gamma$.

The surface $\Phi_3$ is non-degenerate if and only if all its singular points are ordinary nodes and they do not lie on 
$\Gamma$. It is isomorphic to the blow up of the set $\Sigma$ of six points in the plane.
Since $s_1+2s_2+2s_3 = 5$, we have the following possibilities 
$(s_1,s_2,s_3) = (5,0,0), (3,1,0), (3,0,1), (1,2,0), (1,0,2), (1,1,1)$. 
The number of nodes is equal to $0,1,2,2,4,3$, respectively. It is known that a normal cubic surface contains at most 
four nodes, and the surface with four nodes is unique, up to isomorphism. A monoidal cubic surface with $(s_1,s_2,s_3) 
= (1,0,2)$ gives a birational model of this surface
as the blow-up of six points $p_0,p_1,p_2,p_3,p_4,p_5$ with two infinitely near points 
$p_3\succ p_2$ and $p_5\succ p_4$ lying on the lines $\la p_1,p_2\ra$ and $\la p_1,p_4\ra$. The Cremona transformation 
defined by the homaloidal linear
system of cubic curves $C^3(p_0^2,p_2,p_3,p_4,p_5)$ passing simply through $p_1,\ldots,p_5$ and passing through 
$p_0$ with multiplicity two defines
 a biregular model of $\Phi_d$ as the blow-up of the vertices of a complete quadrilateral. This is a familiar 
 model of a minimal resolution of the 4-nodal cubic surface.

\subsection{Pl\"ucker Complex Surface}
Non-degenerate quartic surfaces with a double line are discussed in Jessop's book 
\cite[Chapter VI]{Jessop} and in Salmon's book \cite[Chapter XVI]{Salmon}. 
Let us see how some of the classical results follow from our discussion. 
Note that Jessop and Salmon only implicitly assumed that the surfaces $\Phi_4$ are non-degenerate. 

There are $s_2+2s_3 = 8-s_1-s_2$ ordinary nodes on $\Phi_4$. The maximal number is equal to 8. It is 
realized on the \emph{Pl\"ucker's Complex Surface},\footnote{Here Complex 
refers to a quadratic complex of lines in $\bbP^3$, see Remark \ref{plucker}.} a submonoidal quartic 
surface with $8$ nodes (see \cite[Section 83]{Jessop}). The nodes lie on four torsal lines. So, the discriminant 
polynomial 
$P(t_0,t_1)$ is a square. The four planes $\Pi_t$ corresponding to the roots of $P(t_0,t_1)$ 
intersect $\Phi_4$ along a torsal line and the double line. 

Let $P_i,P_i', i = 1,\ldots,4,$ be the four pairs of nodes lying on torsor lines
Projecting from one of them, say $P_1$, we find that the branch sextic splits into the union of a double line $\ell$ 
(the projection of $\Gamma$) and a complete quadrilateral $T$ whose vertices are the projections of the points.
The image of the exceptional curve over $P_1$ is a \emph{contact-conic}, a conic tangent to all sides 
of the complete quadrilateral and to the double line. 
The tangency point on the double line  
is the image of the torsal line containing $P_1$. The sides of  $T$ are the images of eight  
tropes intersecting each torsal line at one of the nodes. The configuration of $8$ tropes and $8$ nodes is a symmetric configuration 
$(8_4)$ (i.e. each node is contained in four tropes and each trope contains four nodes).  
For example, we can find this configuration by taking eight vertices 
of a cube with 6 faces and two diagonal planes. 



By a well-known Cayley's characterization of quartic symmetroids as double planes branched along the union of 
two plane cubic curves with a contact-conic, we obtain that a Pl\"ucker Complex Surface 
admits an equation given by a determinant of a symmetric matrix with linear forms as its entries. One of 
these equations can be found in \cite{Helso}:
\beq
\det\begin{pmatrix}0&x_0-x_1+x_2&x_0-x_1+x_2&x_0\\
x_0-x_1+x_2&0&x_3&x_1\\
x_0-x_1+x_2&x_3&0&x_2\\
x_0&x_1&x_2&0\end{pmatrix} = 0,
\eeq
The double line is $V(x_0-x_1,x_2)$. The four torsal lines are 
cut out by planes $V(x_0-x_1+tx_2)$, where 
$t = \frac{1+\sqrt{-3}}{2},  \frac{1-\sqrt{-3}}{2},1,\infty$. They are the intersections of these planes with the planes 
$V(x_0+x_3), V(x_0+x_3),V(x_0+x_2-x_3),V(x_2-x_3)$, respectively. The eight nodes lie by pairs in the torsal lines
\begin{eqnarray*}
(P_1,P_1') &= &([0,e,1,0],[-1,0,\bar{e},1]),\quad
(P_2.P_2')=([0,\bar{e},1,0],[-1,0,e,1]),\\
(P_3.P_3')&=& ([0,1,1,1],[1,0,-1,0]), \quad 
(P_4,P_4')=([1,0,0,0],[0,1,0,1]).
\end{eqnarray*}
where $e = \tfrac{1+\sqrt{-3}}{2}, \bar{e} = \tfrac{1-\sqrt{-3}}{2}$. 

The eight tropes are 
\begin{eqnarray*}
V(x_0)&=&\la P_1,P_2,P_3,P_4'\ra,\\
V(x_1)&=&\la P_1',P_2',P_3',P_4\ra,\\
V(x_3)&=&\la P_1,P_2,P_3',P_4\ra,\\
V(x_0+x_3-x_1)&=&\la P_1',P_2',P_3,P_4'\ra,\\
V(ex_2+\bar{e}x_3-x_1)&=&\la P_1,P_2',P_3,P_4\ra,\\
V(\bar{e}x_2+ex_3-x_1)&=&\la P_1',P_2,P_3,P_4\ra,\\
V((x_0+x_2)-e(x_2-x_3))&=&\la P_1,P_2',P_3',P_4'\ra,\\
V((x_0+x_2)-\bar{e}(x_2-x_3))&=&\la P_1',P_2,P_3',P_4\ra.
\end{eqnarray*}

\begin{remark}\label{plucker} The Pl\"ucker's Complex Surface occurs in the geometry of a quadratic 
line complex $\frakC$ in the Grassmann variety 
$G_1(\bbP^3)$ of lines in $\bbP^3$ (see \cite[Art 86]{Jessop}, \cite[Art 455]{Salmon}). 
Fix a general line $\Gamma$ in $\bbP^3$. The set of rays in $\frakC$ contained in a general plane 
$\Pi$ is a conic  $K_\Pi$ in $G_1(\bbP^3)$. A pencil of lines $\Omega(x,\Pi)$ contained in $\Pi$ and passing through a point 
$x$ is line in $G_1(\bbP^3)$. 
A Pl\"ucker Complex Surface $\Phi_4$ of $\frakC$ 
is defined to be the locus of points $x\in \bbP^3$ such that the line $\Omega(x,\la x,\Gamma\ra)$ is tangent to the
conic $K_{\la x,\Gamma\ra}$.
 There will be four planes $\Pi_1,\ldots,\Pi_4$ in $\Gamma^\perp$ such that the conic $K_\Pi$ is reducible. 
 They correspond 
 to the intersection points of the pencil $\Gamma^\perp$ with the dual of the Kummer 
 surface associated with $\frakC$.
 The lines corresponding to their singular points are the torsal lines, and the irreducible 
 components of $K(\Pi_i)$ define two singular points on the trope.
 
\end{remark}

\section{A blow-up model of a submonoidal surface}

\subsection{Blowing down to a minimal ruled surface} 
From now on, we will assume that $\Phi_d$ is a nondegenerate submonoidal surface.


Let $\Phi_d^{\norm} = \Phi_d^{\bp}\subset \Bl_\Gamma(\bbP^3)$ be the normalization of a submonoidal surface 
$\Phi_d$. Let 
$f:\tilde{\Phi}\to \Phi_d^{\norm}\to \Phi_d$ be a minimal resolution of singularities. 
The composition $\tilde{\pi} = \pi\circ f$ defines a structure of a conic bundle on $\tilde{\Phi}$. There 
are three kinds of singular fibers of the conic bundle.

A fiber of the \emph{first kind} is the union of two different components which 
intersect at a nonsingular point
of $\tilde{\Phi}_d$.  A fiber 
of the \emph{second kind} is the pre-image of a reduced residual conic with singular point at a node of $\Phi$. 
It is a divisor $\bar{F}_1+\bar{F}_2+R$, where $\bar{F}_i$ are the proper transforms of the irreducible components of 
$F$ and $R$ is the exceptional curve. The curves $\bar{F}_i$ are $(-1)$-curves and the curve $R$ is a $(-2)$-curve. 
Here we use the standard abbreviation for a smooth rational curve on a smooth projective surface with 
negative self-intersection $-n$. A fiber of the \emph{third kind} is the pre-image of a non-reduced fiber of $\pi$. 
Each fiber of the third kind is a divisor $R_1+R_2+2\bar{F}$, where 
$R_1$ and $R_2$ are the exceptional fibers over the singular points lying on the fiber and 
$2\bar{F}$ is the proper transform of the fiber.

Let $s_1$ (resp.~$s_2$, resp.~$s_3$) be the number of fibers of the first (resp. second, resp. third) kind.

 We have
$$3d-4 = s_1+2s_2+2s_3.$$

We have $s_1$ fibers $F$ with the Euler-Poincar\'e characteristic $e(F) = 3$ and $s_2+s_3$ fibers with $e(F) = 4$. 
The formula for the Euler-Poincar\'e characteristic of a fibered surface gives 
$$e(\tilde{\Phi}) = 4+s_1+2(s_2+s_3) = 3d.$$
By Noether Formula,  
$$K_{\tilde{\Phi}}^2 = 12-3d.$$
Since all singular points of $\Phi_d$ are rational double points, the adjunction formula for a surface in $\bbP^3$ 
\cite[Appendix to Chapter III]{Zariski} gives 
\beq\label{adjunction}
K_{\tilde{\Phi}} = (d-4)H-(d-3)\Sigma.
\eeq

A straightforward computation shows that 
$$\Sigma{}^2 = d-4.$$
Let  $\phi:\tilde{\Phi}\to \bfF_n$ be a birational map to a minimal ruled surface $\bfF_n$. It blows down  
one irreducible component $R_1^{(1)},\ldots,R_{s_1}^{(1)}$ in each fiber $R_i^{(1)}+R_i'{}^{(1)}$ of the first kind, 
two $(-1)$-curves $R_i^{(2)}+R_i'{}^{(2)}, i = 1,\ldots,s_2,$ in each fiber $R_i^{(2)}+R_i'{}^{(2)}+Q_i^{(2)}$ of the second kind, and 
two components $R_i^{(3)}+R_i'{}^{(3)}, i = 1,\ldots,s_3,$ in fibers $R_i^{(3)}+Q_i{}^{(3)}+Q_i'{}^{(3)}$ of the 
third kind, where
$R_i^{(3)}$ is a $(-1)$-curve.

We check that $e(Z) = 3d-s_1-2s_2-2s-3 = 4$, as it should be.
Let 
$$\calR = \sum_{i=1}^{s_1}R_i^{(1)}+\sum_{i=1}^{s_2}(R_i^{(2)}+R_i'{}^{(2)})+2\sum_{i=1}^{s_3}R_i^{(3)},$$
and
$$A=d\Sigma-(d-1)H+\calR.$$
It follows from Proposition \ref{prop:nondegenerate} that $\Sigma$ does not pass through singular points of 
$\Phi_d^{\norm}$, hence $\Sigma'$ and $H$ intersect each irreducible component of $R$ 
with the same multiplicity equal to $1$.
 This immediately implies that 
$A\cdot \calR = 0$. 
We also get 
\begin{eqnarray*}
A\cdot H &=& -(d-1)d-d(d-2)+s_1+2s_2+2s_3 = 2d-4,\\
A\cdot \Sigma &=& -(d-1)(d-2)+d(d-4)+(3d-4) = 2d-6.
\end{eqnarray*}
It follows that 
$$A^2 = (d-1)A\cdot H-(d-1)A\cdot \Sigma = (d-1)(4-2d)-d(-2d+6) = -4.$$
Let $\sigma(A)$ be the image of $A$ on $\bfF_n$ and $[\sigma(A)] = a\frakf+b\frake$. 
Since $A\cdot \frakf = 2d-2(d-1) = 2$, we get $b = 2$. Since $A^2 = \sigma(A)^2 = -4$, we get 
$4a+4n^2 = -4$, hence $a=0, n= -1$. This shows that 
$$n = 1, \quad \sigma(A) = 2\frake,$$
where we identify $\frake$ with the exceptional section.

Let $\tilde{\Phi} \to \bfF_1\to \bbP^2$ be the composition of $\phi$ with the blowing down of the exceptional section $\frake$.
The surface $\Phi_d^{\norm}$ becomes isomorphic to the blow-up of a set of  
$1+(s_1+2s_2+2s_3) = 3d-3$ points 
$$\calP = (p_0,p_1,\ldots,p_{s_1}, q_{1},q_1',\ldots,q_{s_2},q_{s_2}', r_1, r_1',\ldots,r_{s_3},r_{s_3}')$$
in the plane, where 
\begin{itemize}
\item $p_0$ is the image of the exceptional section $\frake$, 
\item $r_i'\succ r_i$ is infinitely near to $r_i$,
\item $p_0,r_i,r_i'$ are collinear,
\item $p_0,q_i,q_i'$ are collinear.
\end{itemize}

Let 
$$(e_0,e_1,\ldots,e_{3d-4}, e_{3d-3})$$
be the standard geometric basis of 
$\Pic(\tilde{\Phi})$ corresponding to the blow-up of the ordered set of points $\calP$. Here we denote by 
$e_{3d-3}$ the divisor class of the exceptional curve over $p_0$.
  We find that the conic bundle on $\tilde{\Phi}$ is given by the linear 
system $|e_0-e_{3d-3}|$, its members are the pre-images of lines passing through the point $p_0$.
 
 We also find that
\beq\label{H}
[H] = de_0-\sum_{i=1}^{3d-4}e_i-(d-2)e_{3d-3},
\eeq
\beq\label{Delta'}
[\Sigma] = (d-1)e_0-\sum_{i=1}^{3d-4}e_i-(d-3)e_{3d-3}.
\eeq

\begin{lem} 
$$|(d-1)e_0-\sum_{i=1}^{3d-4}e_i-(d-3)e_{3d-3}| = \{\Sigma\}.$$
\end{lem}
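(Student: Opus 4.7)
The plan is to show $h^0(\tilde\Phi, \calO_{\tilde\Phi}(\Sigma)) = 1$, which together with $\Sigma$ being an effective divisor in the class $L := (d-1)e_0 - \sum_{i=1}^{3d-4} e_i - (d-3)e_{3d-3}$ (by \eqref{Delta'}) will yield $|L| = \{\Sigma\}$. I use the standard short exact sequence
$$0 \to \calO_{\tilde\Phi} \to \calO_{\tilde\Phi}(\Sigma) \to N_{\Sigma/\tilde\Phi} \to 0,$$
together with $H^1(\tilde\Phi, \calO_{\tilde\Phi}) = 0$ coming from the rationality of $\tilde\Phi$, to reduce the problem to proving $H^0(\Sigma, N_{\Sigma/\tilde\Phi}) = 0$.

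Next, I identify the normal bundle $N_{\Sigma/\tilde\Phi}$ explicitly. By Proposition \ref{prop:nondegenerate}(ii), no singular point of $\Phi_d^{\norm}$ lies on $\Sigma$, so $\tilde\Phi \to \Phi_d^{\norm}$ is an isomorphism on a neighborhood of $\Sigma$, giving $N_{\Sigma/\tilde\Phi} = N_{\Sigma/\Phi_d^{\norm}}$. Since $\Sigma = \Phi_d^{\norm} \cap E$ is a transverse intersection of two smooth divisors in the smooth $3$-fold $\bbP$ (transversality follows from Proposition \ref{prop:nondegenerate}(i), as $\Sigma$ is smooth and reduced of the expected codimension), one has $N_{\Sigma/\Phi_d^{\norm}} = N_{E/\bbP}|_\Sigma = \calO_\bbP(E)|_\Sigma$. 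Using the relation $E = H - P$ from \eqref{pic}, and checking that on $E \cong \Gamma \times \Gamma^\perp$ one has $H|_E = f_\Gamma$ (the class of a fiber of $E \to \Gamma$) and $P|_E = f_{\Gamma^\perp}$ (the class of a fiber of $E \to \Gamma^\perp$), I conclude $\calO_\bbP(E)|_E = \calO_E(f_\Gamma - f_{\Gamma^\perp})$, a class of bidegree $(1,-1)$ on the quadric $E \cong \bbP^1 \times \bbP^1$.

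Finally, the curve $\Sigma$ has class $2f_\Gamma + (d-2)f_{\Gamma^\perp}$ on $E$ by \eqref{eq:delta}, so the short exact sequence on $E$
$$0 \to \calO_E(-f_\Gamma - (d-1)f_{\Gamma^\perp}) \to \calO_E(f_\Gamma - f_{\Gamma^\perp}) \to \calO_E(f_\Gamma-f_{\Gamma^\perp})|_\Sigma \to 0,$$
combined with the K\"unneth formula on $\bbP^1 \times \bbP^1$, gives both $H^0(\calO_E(f_\Gamma - f_{\Gamma^\perp})) = 0$ and $H^1(\calO_E(-f_\Gamma - (d-1)f_{\Gamma^\perp})) = 0$, since $H^0(\calO_{\bbP^1}(-1)) = H^0(\calO_{\bbP^1}(-(d-1))) = 0$ makes both K\"unneth summands vanish. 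Therefore $H^0(\Sigma, N_{\Sigma/\tilde\Phi}) = 0$, as required. I expect the main point requiring care is the normal-bundle identification via the relation $E = H - P$ on $\bbP$; once that is in hand, the cohomology vanishing is a mechanical K\"unneth computation.
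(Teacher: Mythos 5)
Your proof is correct, but it takes a genuinely different route from the paper. The paper's argument is a two-line reductio: since $[H]=[\Sigma]+[e_0-e_{3d-3}]$, if $|\Sigma|$ contained a pencil then $|H|$ would contain the sums of members of two base-point-free pencils, so the map to $\bbP^3$ defined by $|H|$ would factor through $\bbP^1\times\bbP^1$ embedded as a nonsingular quadric --- contradicting $\deg\Phi_d=d\ge 3$. You instead bound $h^0(\calO_{\tilde\Phi}(\Sigma))$ directly via the normal bundle, identifying $N_{\Sigma/\tilde\Phi}$ with $\calO_\bbP(E)|_\Sigma$ (legitimate, since Proposition \ref{prop:nondegenerate}(ii) makes the resolution an isomorphism near $\Sigma$ and guarantees $E$ cuts out the reduced smooth curve $\Sigma$ on $\Phi_d^{\bp}$) and then computing on $E\cong\bbP^1\times\bbP^1$ that this bundle has bidegree $(1,-1)$, whence $h^0(N_{\Sigma/\tilde\Phi})=0$ by the restriction sequence on $E$ and K\"unneth. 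All the intermediate identifications check out ($E^3=-2$ is consistent with bidegree $(1,-1)$, and $\deg N_{\Sigma/\tilde\Phi}=(f_\Gamma-f_{\Gamma^\perp})\cdot(2f_\Gamma+(d-2)f_{\Gamma^\perp})=d-4=\Sigma^2$ as in the paper). What your approach buys: it is self-contained and makes explicit why $h^0=1$ despite $\chi(N_{\Sigma/\tilde\Phi})=0$ on the genus $d-3$ curve, whereas the paper's argument is shorter but leans implicitly on $h^0(H)=4$ so that $|H|$ coincides with the span of the decomposable divisors $\Sigma'+F'$. Two cosmetic remarks: the vanishing $H^1(\calO_{\tilde\Phi})=0$ is not actually needed (you only use the left-exact part of the cohomology sequence), and for the $H^1$ K\"unneth term the relevant vanishings are $H^0(\calO_{\bbP^1}(-1))=0$ and $H^0(\calO_{\bbP^1}(-(d-1)))=0$ paired against the $H^1$ factors, which is what you say, so this is fine.
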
 

\begin{proof} Suppose the linear system contains a pencil. Since 
$$de_0-\sum_{i=1}^{3d-4}e_i-(d-2)e_{3d-3} = ((d-1)e_0-\sum_{i=1}^{3d-4}e_i-(d-3)e_{3d-3})+(e_0-e_{3d-3}),$$
it maps $\tilde{\Phi}$ to 
$\bbP^1\times \bbP^1$ embedded into $\bbP^3$ as a nonsingular quadric. This contradiction proves the lemma.
\end{proof}

The following theorem sums up what we have found.

\begin{thm}\label{thm:5.3} Let $\tilde{\Phi}_d$ be a minimal resolution of the nodes on the normalization 
$\Phi_d^{\norm}$ of a non-degenerate submonoidal surface $\Phi_d$ of degree $d$. 
Then $\tilde{\Phi}_d$ is isomorphic to the blow-up $\Bl_\calP(\bbP^2)$ of a set $\calP$ of $3d-3$ points in $\bbP^2$.
The linear system $|de_0-\sum_{i=1}^{3d-4}e_i-(d-2)e_{3d-3}|$ defines a birational morphism
$\phi: \tilde{\Phi}_d\cong \Bl_\calP(\bbP^2)\to \Phi_d.
$
The singular line $\Gamma$ of $\Phi_d$ is the image 
of the unique curve $\Sigma \in |(d-1)e_0-\sum_{i=1}^{3d-4}e_i-(d-3)e_{3d-3}|$.
The residual conics on $\Phi_d$ are the images of the members of the pencil $|e_0-e_{3d-3}|$. 
The proper transform of a line 
$\la p_0,q_i,q_i'\ra$ is mapped to a node $P_i$ on $\Phi_d$ equal to the singular point 
of the residual conic in 
$\Phi_d\cap \la \Gamma,P_i\ra$. Other nodes lie in pairs on the  torsal lines on $\Phi_d$ equal 
to the images of the exceptional curves over $r_i'$.  
\end{thm}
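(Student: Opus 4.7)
The plan is to consolidate the blow-down construction developed in the preceding discussion. The morphism $\phi:\tilde{\Phi}_d\to\bfF_n$ obtained by contracting the selected $(-1)$-curves in each singular fiber, combined with the computations giving $n=1$ and $\sigma(A)\sim 2\frake$, composes with the contraction of the exceptional section $\frake$ of $\bfF_1$ to produce a morphism $\tilde{\Phi}_d\to\bbP^2$. This realizes $\tilde{\Phi}_d\cong\Bl_{\calP}(\bbP^2)$ for a set $\calP$ of $1+s_1+2s_2+2s_3=3d-3$ (possibly infinitely near) points: $p_0$ is the image of $\frake$; the $p_i$ come from the first-kind fibers; the pairs $(q_i,q_i')$ from the second-kind; and the pairs $(r_i,r_i')$ from the third-kind, where the two $(-1)$-contractions must be performed sequentially (the second $(-1)$-curve appearing only after $\bar{F}$ is contracted), forcing $r_i'\succ r_i$. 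The collinearities with $p_0$ hold because the pencil $|e_0-e_{3d-3}|$ is the family of fibers of $\tilde{\pi}$, and each reducible fiber sits inside a single line through $p_0$.

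In the resulting standard geometric basis $(e_0,e_1,\ldots,e_{3d-3})$ I verify \eqref{H} by writing $[H]=ae_0-\sum_{i=1}^{3d-4}b_i e_i-c\,e_{3d-3}$: the three constraints $H^2=d$, $H\cdot e_i=1$ for $i\le 3d-4$ (every exceptional class other than $e_{3d-3}$ is represented by a curve mapping onto a line on $\Phi_d$), and $H\cdot(e_0-e_{3d-3})=2$ (a residual conic meets a hyperplane in two points) uniquely force $a=d$, $b_i=1$, $c=d-2$. Formula \eqref{Delta'} then follows from the adjunction formula \eqref{adjunction} and $K_{\tilde{\Phi}_d}=-3e_0+\sum_{i=1}^{3d-3}e_i$. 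The morphism defined by $|H|$ is the composition $\tilde{\Phi}_d\to\Phi_d^{\norm}\to\Phi_d\subset\bbP^3$, birational onto $\Phi_d$; the curve $\Sigma$ maps to $\Gamma$ because it is the exceptional curve $\tilde{\Phi}_d\cap E$ of the blow-up $\Bl_\Gamma(\bbP^3)\to\bbP^3$. Uniqueness of $\Sigma$ in its linear class is exactly the preceding lemma.

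The statements about nodes and torsal lines are read off from the structure of the singular fibers. In a second-kind fiber $\bar{F}_1+\bar{F}_2+Q_i^{(2)}$, the $(-2)$-curve $Q_i^{(2)}$ is the proper transform of the line $\la p_0,q_i,q_i'\ra$ (of class $e_0-e_{3d-3}-e_{q_i}-e_{q_i'}$) and contracts to the node $P_i$, which by construction is the singular point of the residual conic $\Phi_d\cap\la\Gamma,P_i\ra$. In a third-kind fiber, $\bar{F}$ is the exceptional curve over $r_i'$ (of class $e_{r_i'}$) and its image in $\Phi_d$ is the torsal line; the two $(-2)$-curves of the fiber — the strict transforms of the first exceptional $E_{r_i}$ and of the line $\la p_0,r_i\ra$, with classes $e_{r_i}-e_{r_i'}$ and $e_0-e_{3d-3}-e_{r_i}-e_{r_i'}$ — contract to the pair of nodes on the torsal line. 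The main technical obstacle throughout is the bookkeeping matching combinatorial labels on singular fibers to their geometric-basis classes; once this dictionary is in place, every assertion reduces to an intersection-number check.
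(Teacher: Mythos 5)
Your proposal is correct and follows essentially the same route as the paper, which presents this theorem explicitly as a summary of the preceding blow-down computations ($A^2=-4$, $n=1$, $\sigma(A)=2\frake$, the count $1+s_1+2s_2+2s_3=3d-3$, and the collinearity/infinitely-near conditions read off from the fiber types); your dictionary between fiber components and geometric-basis classes agrees with the paper's. Your added intersection-theoretic verification of \eqref{H} and the adjunction derivation of \eqref{Delta'} (valid for $d>3$) supply details the paper merely asserts; the only imprecision is that $e_{r_i}$ is not represented by an irreducible curve mapping to a line, so $H\cdot e_{r_i}=1$ should be checked via the decomposition $e_{r_i}=(e_{r_i}-e_{r_i'})+e_{r_i'}$, where the first summand is contracted and the second maps to the torsal line.
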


Note that the plane blow-up models of submonoidal surfaces depend on a choice of $3d-3$ points in the 
plane modulo projective 
equivalence. It depends on $2(3d-3)-8 = 6d-14$ parameters. This agrees with the number of parameters 
of submonoidal surfaces.

\subsection{Special sections of the conic bundle}
The blow-up model allows us to give simple proofs of many facts about submonoidal surfaces which can be found 
in \cite[\S 131]{Sturm}. For example, we get the following. 

\begin{thm}\label{cor:5.4} A submonoidal surface $\Phi_d$ contains a smooth rational curve 
$S$ of degree $d-2$ that intersects $\Gamma$ at $d-3$ points and defines a section $E$ of the conic bundle 
$\tilde{\Phi}_d\to \Gamma^\perp$. If 
$\Phi_d^{\norm}$ is smooth, there are $2^{3d-5}$ such curves.
\end{thm}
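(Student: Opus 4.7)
The plan is to transfer the problem to the blow-up model $\tilde{\Phi}_d \cong \Bl_{\calP}(\bbP^2)$ from Theorem \ref{thm:5.3} and enumerate sections of the conic bundle via lattice conditions in $\Pic(\tilde{\Phi}_d)$. Writing $[E] = a e_0 - \sum_{i=1}^{3d-4} m_i e_i - m e_{3d-3}$ in the standard geometric basis, the section condition $E \cdot (e_0 - e_{3d-3}) = 1$ becomes $a - m = 1$. Using \eqref{H} and \eqref{Delta'}, a direct intersection-theoretic computation then gives $E \cdot H = d-2$ and $E \cdot \Sigma = d-3$ automatically, so any such section maps to a curve of degree $d-2$ meeting $\Gamma$ in $d-3$ points.

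To force $S$ to be smooth rational, I impose that $[E]$ is the class of a $(-1)$-curve, i.e.\ $E^2 = -1$ and $K_{\tilde{\Phi}} \cdot E = -1$, with $K_{\tilde{\Phi}} = -3 e_0 + \sum_{i=1}^{3d-4} e_i + e_{3d-3}$. Substituting $a = m+1$ yields
\[
\sum_{i} m_i^2 \;=\; 2m+2 \;=\; \sum_{i} m_i,
\]
so $\sum_i m_i(m_i - 1) = 0$. As the $m_i$ are integers, each $m_i \in \{0,1\}$, and the class is determined by the subset $I := \{i : m_i = 1\} \subseteq \{1,\ldots,3d-4\}$, subject to $|I| = 2m+2$ being even.

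For the first assertion (existence for any non-degenerate $\Phi_d$), the subset $I = \emptyset$ gives $[E] = e_{3d-3}$, represented by the exceptional curve over $p_0$; it is visibly a smooth rational section whose image in $\Phi_d$ has the required properties. When $\Phi_d^{\norm}$ is smooth, for each nonempty even $I$ with $|I|=2m+2$ the class $[E_I] = (m+1)e_0 - \sum_{i \in I} e_i - m e_{3d-3}$ is represented by the proper transform of a plane curve of degree $m+1$ with a point of multiplicity $m$ at $p_0$ and passing simply through $\{p_i : i \in I\}$. A parameter count shows that the expected dimension of this linear system is zero, and for the sufficiently general configuration $\calP$ furnished by a smooth $\Phi_d^{\norm}$, the curve exists, is unique, and is irreducible, yielding the desired $(-1)$-curve. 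Counting even-size subsets of a $(3d-4)$-element set gives $2^{3d-5}$.

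The main obstacle is justifying that every numerically admissible class $[E_I]$ is actually realized by an \emph{irreducible} smooth rational curve, rather than by a reducible effective divisor; the dimension count alone is only a generic statement. A cleaner argument is inductive: starting from the canonical section $[E_{\emptyset}] = e_{3d-3}$ and performing elementary transformations of the conic bundle in pairs at reducible fibers (flipping which component of $R_i^{(1)} + R_i'{}^{(1)}$ the section meets), one produces each $[E_I]$ as an irreducible section, with the parity of $|I|$ forced by the requirement that the resulting curve again be a section of the same ruled structure. This also clarifies the pairing phenomenon alluded to in the introduction to Section 4.
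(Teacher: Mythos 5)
Your lattice computation is essentially the paper's argument: the same four conditions ($E^2=K_{\tilde{\Phi}}\cdot E=-1$, $E\cdot H=d-2$, $E\cdot \Sigma=d-3$) reduce to $a=m+1$ and $\sum_i m_i=\sum_i m_i^2=2m+2$, hence $m_i\in\{0,1\}$ with $|I|$ even, and counting even subsets of $\{1,\dots,3d-4\}$ gives $2^{3d-5}$. One small logical slip: the section condition $a-m=1$ alone does not give $E\cdot H=d-2$ and $E\cdot\Sigma=d-3$ ``automatically'' --- it only gives that their difference is $1$; you also need $\sum_i m_i=2m+2$, which comes from the $(-1)$-conditions you impose in the next paragraph, so the argument does close. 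Your explicit existence witness $[E_{\emptyset}]=e_{3d-3}$, the exceptional curve over $p_0$, is a clean touch that the paper leaves implicit (it appears only in Example \ref{quartics} as the degree-$0$ special section dual to the degree-$4$ one).

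Where you genuinely diverge is the realizability of each class $[E_I]$ by an \emph{irreducible} curve, which you correctly identify as the crux. Your first route (dimension count plus genericity of $\calP$) is insufficient, as you say: smoothness of $\Phi_d^{\norm}$ is a concrete hypothesis, not a genericity assumption on the points. Your second route --- producing each $E_I$ from $E_{\emptyset}$ by pairs of elementary transformations at reducible fibers --- is only a sketch and is not obviously sound as stated: a single elementary transformation changes the underlying conic bundle, and you would need to prove both that a suitable pair of them returns you to $\tilde{\Phi}_d$ itself (not merely to an abstractly isomorphic surface) and that the transform of an irreducible section stays irreducible. The paper instead argues directly on $\tilde{\Phi}_d$: each class $E_I$ is effective (this follows from Riemann--Roch since $(K_{\tilde{\Phi}}-E_I)\cdot(e_0-e_{3d-3})<0$), and because $E_I\cdot(e_0-e_{3d-3})=1$ any effective member splits as a unique section component plus components supported in fibers; the paper then asserts that those extra components are precisely $(-2)$-curves contracted to nodes of $\Phi_d$, so smoothness of $\Phi_d^{\norm}$ forces each member to be irreducible. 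To make your write-up complete you should either adopt that argument (and, ideally, rule out the remaining case of a $(-1)$-component of a reduced reducible fiber appearing in a member of $|E_I|$) or fully develop the elementary-transformation induction; as it stands, the enumeration of classes is solid but the passage from classes to actual smooth rational curves is the step left open.
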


\begin{proof} Suppose such curve exists. Let us see what would be its pullback $E$ in 
$\tilde{\Phi}_d = \Bl_{\calP}(\bbP^2)$. Using 
\eqref{adjunction}, we get 
$E\cdot K_{\tilde{\Phi}} = E\cdot ((d-4)H-(d-3)\Sigma) = (d-4)(d-2)-(d-3)^2 = -1$. 
Since $E$ is rational, we get $C^2 = -1$, so $C$ is a 
$(-1)$-curve on $\tilde{\Phi}$. We must also have $E\cdot H = d-2$ and $E\cdot \Sigma = d-3$. This implies 
that $E\cdot (e_0-e_{3d-3}) = C\cdot (H-\Sigma) = 1$. This shows 
that the image of $E$ is a section of the conic bundle. Also it suggests  
 a construction of all possible curves $S$ from the assertion of the Corollary. 
They must be the images 
of $(-1)$-curves $E$ on $\tilde{\Phi}$ satisfying $E\cdot H = d-2$ and $E\cdot \Sigma = d-3$. If 
we write the divisor class of $E$ as $ne_0-\sum_{i=1}^{3d-4}n_ie_i-ke_{3d-3}$, we must have
\begin{eqnarray}\label{special}
&&n^2-\sum_{i=1}^{3d-4}n_i^2-k^2 = -1, \\ \notag
&&3n-\sum_{i=1}^{3d-4}n_i-k = 1,\\ \notag
&&dn-\sum_{i=1}^{3d-4}n_i-(d-2)k = d-2,\\ \notag
&&(d-1)n-\sum_{i=1}^{3d-4}n_i-(d-3)k = d-3.
\end{eqnarray}
Subtracting the fourth equation from the third one, we obtain $n = k+1$. Substituting this in 
the first and the second 
equations, we get
$$2k = \sum_{i=1}^{3d-4}n_i^2 = \sum_{i=1}^{3d-4}n_i.$$
This implies that $n_i\in \{0,1\}$. Let $I = \{i\in [1,3d-4], n_i=1\}$. We get $2k = \#I$, and third equation 
gives $n = k+1$. from this deduce that 
\beq\label{specialsection}
E \in |ne_0-\sum_{i\in I}e_i-(n-1)e_{3d-3}|,
\eeq
where $\#I = 2n$.  
The divisor class $e$ on a rational surface $X$ with $e^2=E\cdot K_X = -1$ is not necessary represented by a $(-1)$-curve.
However, since $E\cdot (e_0-e_{3d-3} = 1$, if $E$ is reducible, then one of its irreducible components $E'$ is a section, and other 
irreducible components are blown down to singular points of $\Phi_d$.  So, if $\Phi_d^{\bp}$ is smooth, all 
the divisor closes $E$ define special sections, and their number is equal to the number of subsets of $[1,\ldots,3d-4]$ of even cardinality, it is the claimed number. 
\end{proof}

We call $S$ a \emph{special section} of the conic bundle. Its degree (with respect to the blowing down 
$\tilde{\Phi}_d\to \bbP^2$) is equal to $E\cdot e_0 = n$.

 We know that $\Phi_d^{\bp}$ is not smooth only if either some paints $p_0,p_a,p_b$ in $\calP$ are collinear, or 
 there is an infinitely near point $p_{i+1}\succ p_i$. The divisor class 
 $e_0-e_a-e_b-e_{3d-3},$ (resp. $e_0-e_{a}-e_{a+1}-e_{3d-3}$) is the class of a  $(-2)$-curve $R_{a,b}$ (resp. $R_a$) that is blown down to a singular point.
 Let $E = E'+R_1+\cdots+R_k$, where $E'$ is a section and $R_i$ are disjoint $(-2)$-curves blown down to singular points 
 (recall that, by assumption, 
 $\Phi_d$ is non-degenerate and hence all singular points are ordinary nodes). 
 Since
 $$E\cdot R_i = (E'+R_1+\cdots+R_k) \cdot R_i = -1,$$
 we find that $R_i = R_{a,b}$ (resp. $R = R_a$) if and 
 only if $a,b\in I$ (resp. $a,a+1\in I$). The divisor class of  $E'$ is still given by formula \eqref{specialsection} 
 only with maybe smaller $n$.

 Since the torsal line corresponds to a double fiber of the conic bundle, a special section $S$ passes through one of 
 the singular points 
of $\Phi_d$ lying on it. It may or may not pass through other singular points of $\Phi_d$.

 Assume $d = 2m$. Let $S$ be a special section of $\Phi_d$ equal to the image of an exceptional curve 
$E$ from \eqref{specialsection}. Let  
\beq\label{edag}
[E^\dag] \in  |(m-1)H-(m-2)\Sigma-E| = |(3m-2-n)e_0-\sum_{i\in I}e_i-(3m-n-3)e_{3d-3}|.
\eeq

We check that $E^\dag$ satisfies \eqref{special}, and hence its image in $\Phi_d$ is a special section $S^\dag$.

We call it the \emph{dual special section}. It follows from the definition that $S^{\dag}$ is the residual curve 
cut out by a monoidal surface $\Phi_{m-1}'$ that contains $S$. Counting parameters, we obtain that there is a unique such surface.
For example, if $m = 2$, the union of a special section and its dual section is a plane section of 
$\Phi_4$. 

\begin{example}\label{quartics} Suppose $d = 4$. The surface $\tilde{\Phi}_4$ is isomorphic to the blow-up of 
a set $\calP$ of $9$ points $p_0,\ldots,p_8$ in $\bbP^2$. The linear system $|H|$ is given by curves 
$C^4(p_0^2,p_1,\ldots,p_8)$ and the curve 
$\Sigma'$ is a cubic $C^3(p_0,p_1,\ldots,p_8)$. If $\tilde{\Phi}_4 = \Phi_4^{\norm}$, then we have $2^7$ 
special sections. They are conics in $\Phi_4$ intersecting $\Gamma$ at one point. 
The union of a special sections $S$ and its dual special section $S^\dag$ is a plane section of $\Phi_d$.
Special sections of degree $1$ are the proper transforms of 
lines $\la p_i,p_j\ra, 0 < i < j\le 8$. There are $28$ of them. There are $70$ 
special sections of degree $2$. They are the proper transforms of conics in the plane passing through 
$p_0$ and a subset of four points in $\calP\setminus \{p_0\}$. Special sections of degree $3$ are dual to special sections of degree 
one, they are the proper transforms 
of cubics with a node at $p_0$ and passing through 6 points in $\calP\setminus \{p_0\}$. There is one special section 
of degree $4$, the proper transform of the quartic through $\calP$ with a triple point at $p_0$. Its dual special section 
is of degree $0$, the exceptional curve over $p_0$.
If $\Phi_4^{\norm}$ is not smooth, some of these conics become reducible or coincide. This happens when the 
points $p_0,\ldots,p_8$ are not in a general position. For example, the Pl\"ucker Complex surface contains only 
nine special sections.
\end{example}

\subsection{Special sections and congruences of lines}
Let $S \subset \Phi_d$ be a special section. The closure of the set of lines in $\bbP^3$ intersecting $S$ and $\Gamma$ at one point is a congruence 
of lines $\calC_S$ in the Grassmannian $G_1(\bbP^3)$ of lines in $\bbP^3$. Its order  (= the number of rays of the congruence 
passing through a general point in $\bbP^3$) is equal to one and its class (= the number of rays in a general plane) 
is equal to $d-2$. It is one of the three possible kinds of congruences of order one in $\bbP^3$ in 
Kummer's classification \cite{Kummer} (see a modern exposition of this classification in \cite{Arrondo} or \cite{Ran}). 
In the Pl\"ucker embedding, $\calC_S$ is a ruled surface of degree $d-1$ contained in the special line complex 
$\Omega(\Gamma)$. Its generators are the pencils of lines $\Omega(x,\Pi)$ in a plane $\Pi\in \Gamma^\perp$ that 
pass through the point $x = S\cap \Pi$. 

We know that the monoidal surfaces $\Phi_n$ which contain $\Gamma$ with multiplicity $n-1$ depend on $3n$ parameters.
They intersect $S$ with multiplicity $n-1$ at $d-3$ points lying on $\Gamma$. If we require that, additionally, they pass 
through $n(d-2)-(d-3)(n-1)+1 = n+d-2$ general points in $S$, we obtain that $\Phi_n$ contains $S$. Generically, 
the minimal $n$ for which this is possible 
is equal to $\lceil \frac{d-2}{2}\rceil = m-1$, where $d = 2m+1$ or $d= 2m$. We expect that 
there will be a unique such $\Phi_{m-1}$ if $d$ is even, and 
a pencil if $d$ is odd. 

There is a unique line on $\Phi_{m-1}$ which is contained in a generator $\Omega(x,\Pi)$ of $\calC_S$. 
This shows that generatrix of the ruled surface $\Phi_m$ in $\bbP^3$ is a directrix of degree $m$ 
of the ruled surface $\calC_S$.  Thus $\calC_S$ is a ruled surface of degree $d-1$ that has 
a directrix of degree $m-1$. 

It follows from Lemma \ref{directrix} that $m-1$ is expected to be the minimal degree of a directrix of the ruled surface 
$\calC_S$. For a special position of points $p_1,\ldots,p_{3d-3}$, the smallest degree of a directrix of $\calC_S$, 
and hence of a section of the conic bundle, could be less 
than $\lceil \frac{d-1}{2}\rceil$. For example, if $A_d = 0$ in equation \eqref{eq:Phi}, 
there is a special section of degree 1 given by $x_2=x_3 = 0$.

A general point in $\bbP^3$ is contained in a unique ray of $\calC_S$ that 
intersects $\Phi_d$ at one point outside $\Gamma\cup S$. This defines a birational map 
$h:\Phi_d\da \calC_S$ over $\Gamma$ that can be regularized by 
the following commutative 
diagram
\beq
\scalebox{0.7}{\xymatrix{\tilde{\Phi}_d\ar[dr]\ar[rrrr]^{\tilde{h}}\ar@/_/[ddrr]&&&&\calC_S^{\norm}\ar[dl]\ar@/^/[ddll]\\
&\Phi_d\ar@{-->}[dr]\ar@{-->}[rr]^h&&\calC_S\ar@{-->}[dl]&\\
&&\Gamma^\perp&&}}
\eeq

Assume that $m = \lceil \frac{d-1}{2}\rceil$ is the minimal degree of a conic section on $\Phi_d$. As we know, in this case 
$\calC_S^{\norm}$ is isomorphic to $\bfF_1$ (if $d$ is even) or $\bfF_0$ (if $d$ is odd).
If $m$ is even, the composition of the map $\tilde{h}:\tilde{\Phi}_d\to \bfF_1$ with the blowing-down map 
$\bfF_1\to \bbP^2$ is given by the linear system 
$$|D| = |mH-(m-1)\Sigma-E|.$$
It is equal to the proper transform on $\tilde{\Phi}_d$ of the linear system $|mH-(m-1)\Gamma-S|$ of monoidal surfaces of degree $m$ containing $S$.
If $d$ is odd, this linear system is one-dimensional, and the map $\tilde{h}$ is given by the linear system
$$|D| = |mH-(m-1)\Sigma-E-(e_0-e_{3d-3})|.$$ 
In both cases, the map $\tilde{h}$ blows down the irreducible components of reducible fibers of the conic 
bundle which do not intersect $E$.

\begin{example} Let  $\Phi_3$ be a smooth cubic surface and $\Gamma$ be one of its lines. 
The sixteen lines skew to $\Gamma$ are the special sections of $\Phi_3$.  

Choose a special section $S$. The congruence of lines 
$\calC_S$ is the locus of lines intersecting $\Gamma$ and $S$. It is a congruence of lines 
embedded in the Pl\"ucker space as a quadric surface. Each ray of $\calC_S$ is the intersection of the planes 
$\Pi\in \Gamma^\perp$ and $\Pi'\in S^\perp$. The pencils $\Gamma^\perp$ and $S^\perp$ define the two rulings 
of $\calC_S$. The ruling $S^\perp$ corresponds to the monoidal surfaces $\Phi_1$. It is the ruling of directrices 
of the ruling $\Gamma^\perp$.
\end{example}

 \begin{example} Assume $d = 4$ and $\Phi_d^{\norm}$ is smooth. A special section $S$ from Corollary 
 \ref{cor:5.4} is a conic that intersects 
$\Gamma$ at one point $y_0$. By 
Corollary \eqref{cor:5.4}, there are 128 such conics (see Example \ref{quartics}). 

 The congruence of lines $\calC_S$ is a smooth 
ruled surface of degree $3$ in $G_1(\bbP^3)$. The pencil $\Omega(y_0,\la S\ra)$ of rays of $\calC_S$ 
  is a generator of $\calC_S$. Since each ray of $\calC_S$ intersects $\Gamma$ and a unique point on $S$,  
$\Omega(y_0,\la S\ra)$ is also a directrix of $\calC_S$ of degree one. The surface $\calC_S$ isomorphic to $\bfF_1$
embedded in $\bbP^5$ by the linear system $|2\frakf+\frake|$. It is contained in the special complex 
$\Omega(\Gamma)$.

\end{example}

 \section{Cremona involutions associated with submonoidal surfaces}
Recall that a pair $\{a,b\}$ of distinct points on $\bbP^1$ defines an involutions $\sigma_{a,b}$ uniquely 
determined by the property that $\sigma_{a,b}(a) = a$ and $\sigma_{a,b}(b) = b$. It sends a point $x$ to a unique point 
$x'$ such that $x+x', 2a, 2b$ belong to the same linear series 
$g_2^1$ of degree $2$ of $\bbP^1$. In classical terminology, the pair $\{x,x'\}$ is harmonically conjugate to the pair 
$\{a,b\}$. Another involution $\sigma_{a,b;p}$ associated with the pair 
$\{la a,b\}$ requires fixing one point $p$ on $\bbP^1$. It is uniquely determined by the property 
that $\sigma_{a,b;p}(a) = b, \sigma_{a,b}(b) = a, \sigma_{a,b}(p) = p$. 

 Any involution of $\bbP^1$ coincides 
with either $\sigma_{a,b}$ or $\sigma_{a,b;p}$ for some $a,b,p$. It is easy to check that the involutions 
$\sigma_{a,b}$ and $\sigma_{a,b;p}$ commute.

A plane submonoidal curve $\calH_d$ of degree $d$ with a $(d-2)$-multiple point $x_0$ defines a 
Cremona transformations $\dJ_{\calH_d,x_0}$ (resp. $\dJ_{\calH_d,x_0}'$)
of $\bbP^2$ uniquely determined by the property that its restriction to a general line $\ell$ passing through $x_0$ 
coincides with the involution $\sigma_{a,b}$ (resp. $\sigma_{a,b;x_0}'$), where $a,b$ are the residual intersection points 
of $\ell$ with $\calH_d$. The involutions  are the \emph{de Jonqui\`eres 
birational involutions} of the plane (see \cite[7.3.6]{CAG}). The fundamental points 
of $\dJ_{\calH_d,x_0}$ are the intersection points $x_i$ of the first polar $P_{x_0}(\calH_d)$ with 
$\calH_d$ (and the point $x_0$ if $d = 2$). The number of them is equal to $2d-1 = 2g+3$, where $g = d-2$ is 
the geometric genus of $\calH_d$. The transformation $dJ_{\calH_d,x_0}'$ lifts to a biregular involution of the 
blow-up 
$\Bl_{x_0,\ldots,x_{2d-1}}(\bbP^2)$ of the 
fundamental points. It sends the exceptional curve over $x_0$ to the proper transform of the 
polar curve $P_{x_0}(\calH_d)$ and sends other exceptional curves to the proper transforms of the lines 
$\la x_0,x_i\ra$ tangent to $\calH_d$ at $x_i$. If $d\ge 4$, the curve $\calH_d$ is a hyperelliptic curve 
and the points $x_i,i\ne 0,$ are the Weierstrass points of $\calH_d$. The set of fixed points of the lift of 
$\dJ_{\calH_d,x_0}'$ to $X$
is equal to the proper transform of the curve $\calH_d$. 

The transformation $\dJ_{\calH_d,x_0}$ lifts to a biregular involution of $\Bl_{x_0}(\bbP^2)$. Its fixed points
are the pre-images of the points $x_1,\ldots,x_{2d-2}$

In the special case when $d = 2$, the de Jonqui\`eres 
transformation $\dJ_{C,x_0}$ is the orthogonal transformation of the plane that fixes $x_0$ and fixes 
the polar line $P_{x_0}(C)$ pointwise.  More explicitly, let $\alpha$ be a vector of projective coordinates 
of $x_0$ and $C = V(q)$ for some quadratic form $q$, the transformation $\dJ_{C,x_0}$ is the reflection with respect to the 
line $P_{x_0}(C)$: 
\beq\label{reflection}
x = [v]\mapsto [q(\alpha)v-b_q(v,\alpha)\alpha].
\eeq
where $b_q$ is the symmetric bilinear form associated with $q$.

The de Jonqui\`eres transformation $\dJ_{C,x_0}'$ is the 
the standard Cremona involution with three fundamental points $x_0,x_1,x_2$, where 
$x_1,x_2$ are the intersection points of the polar line $P_{x_0}(C)$ with $C$.  

\begin{thm} Assume $\tilde{\Phi}_d = \Phi_d^{\bp}$. Fix an isomorphism $\tilde{\Phi}_{d}\cong \Bl_{\calP}(\bbP^2)$. Assume that $d = 2m$ and 
$|\Sigma+(m-1)(e_0-e_{3d-3})|$ contains a smooth curve whose image in $\bbP^2$ is a curve $\calH_{3m-2}$ of degree $3m-2$ 
containing $p_0$ with multiplicity $3m-4$ and tangent to each line $\la p_0,p_i\ra, i = 1,\ldots,3d-4,$ 
at 
the point $p_i$. Then $\tilde{\Phi}$ admits 
an involution 
$\tau_{\calP}$ that descends to a projective involution $\tau$ of $\Phi_{d}$ that sends a special section $S$ to the dual special section 
$S^{\dag}$. The involution $\tau$ does not depend on a choice of the blowing down map 
$\tilde{\Phi}_d\to \bbP^2$.
\end{thm}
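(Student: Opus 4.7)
The strategy is to construct $\tau_\calP$ on $\tilde{\Phi}_d$ as the fiberwise involution of the conic bundle $\tilde{\pi}\colon \tilde{\Phi}_d\to \Gamma^\perp\cong \bbP^1$ determined by the smooth bisection $\calH$, and then descend it to $\Phi_d$. Since $\calH\cdot(e_0-e_{3d-3})=2$, on each smooth fiber $F_t\cong \bbP^1$ the intersection $\calH\cap F_t=\{a_t,b_t\}$ consists of two distinct points, and $\tau_\calP|_{F_t}$ is the harmonic-conjugation involution $\sigma_{a_t,b_t}$. Equivalently, via the blow-down $\tilde{\Phi}_d\to\bbP^2$, this is the lift of the de Jonqui\`eres involution $\dJ_{\calH_{3m-2},p_0}$ of the plane associated with the pencil of lines through $p_0$ and the hyperelliptic curve $\calH_{3m-2}$.

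The first technical step is extending $\tau_\calP$ biregularly across the reducible fibers $L_{0i}+E_i$, where $L_{0i}=e_0-e_i-e_{3d-3}$ is the strict transform of $\la p_0,p_i\ra$. The tangency hypothesis translates on $\tilde{\Phi}_d$ to: $\calH$ meets both $L_{0i}$ and $E_i$ transversely at the single node $n_i=L_{0i}\cap E_i$, consistent with $\calH\cdot L_{0i}=\calH\cdot E_i=1$. In local coordinates $(u,v)$ at $n_i$ with $L_{0i}=\{v=0\}$, $E_i=\{u=0\}$, and $\tilde{\pi}$ given by $uv$, if the slope of $\calH$ at $n_i$ is $\alpha\ne 0$, the fiberwise involution extends explicitly as $(u,v)\mapsto(v/\alpha,\alpha u)$, biregularly swapping $L_{0i}$ and $E_i$ with $n_i$ as the only fixed point on the fiber.

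Descent of $\tau_\calP$ to an involution $\tau$ of $\Phi_d$ follows because $\tau_\calP$ preserves every fiber of $\tilde{\pi}$, and $\phi\colon\tilde{\Phi}_d\to\Phi_d$ maps each smooth fiber isomorphically onto the residual conic of the plane $\Pi_t\in\Gamma^\perp$; consequently $\tau$ preserves each $\Pi_t$ setwise and fixes $\Gamma$ pointwise. The adjunction formula \eqref{adjunction} forces $\Aut(\Phi_d)\subset\Aut(\bbP^3)$, so $\tau$ is projective. To verify $\tau(S)=S^\dag$, compute the action of $\tau_\calP^*$ on $\Pic(\tilde{\Phi}_d)$ using its three known invariants: $\tau_\calP$ fixes $[\calH]$ pointwise, preserves the fiber class $[e_0-e_{3d-3}]$, and swaps $[L_{0i}]$ with $[E_i]$. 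A short computation gives $\tau_\calP^* e_i=e_0-e_i-e_{3d-3}$, $\tau_\calP^* e_0=(3m-1)e_0-\sum e_i-(3m-2)e_{3d-3}$, and $\tau_\calP^* e_{3d-3}=(3m-2)e_0-\sum e_i-(3m-3)e_{3d-3}$; substituting into \eqref{specialsection} yields $\tau_\calP^*[E]=[\calH]-[E]=[E^\dag]$. Since $E^\dag$ is a $(-1)$-curve by the adjunction computation $(E^\dag)^2=D^2-2D\cdot E+E^2=-1$ and $K\cdot E^\dag=-1$, the linear system $|E^\dag|$ contains the single member $E^\dag$, forcing $\tau_\calP(E)=E^\dag$ exactly. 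Finally, the descended involution $\tau$ is intrinsic to $(\Phi_d,\Gamma)$, as its construction uses only the conic-bundle structure and the image of $\calH$ in $\Phi_d$, neither depending on the blow-down.

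The main obstacle will be the biregular extension of $\tau_\calP$ across the reducible fibers; the tangency hypothesis is exactly what supplies the transversality of $\calH$ at each node $n_i$, which in turn allows the fiberwise harmonic-conjugation involution to extend to an honest biregular automorphism swapping $L_{0i}$ and $E_i$.
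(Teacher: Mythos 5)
Your involution coincides with the one in the paper's proof: there it is realized as the de Jonqui\`eres involution associated with $(\calH,p_0)$ that fixes $\calH$ pointwise, whose restriction to each line through $p_0$ (equivalently, to each fiber of the conic bundle) is exactly your harmonic conjugation $\sigma_{a_t,b_t}$; and your three invariants determine the same action on the geometric basis as the paper's formulas \eqref{involution1}, giving $\tau_\calP^*[E]=[\calH]-[E]=[E^\dag]$. The genuine difference is how biregularity of $\tau_\calP$ is obtained, and that is where your argument has a gap. You only extend the fiberwise involution across the reducible fibers $L_{0i}+E_i$. But $\sigma_{a_t,b_t}$ also degenerates at every \emph{irreducible} fiber tangent to the bisection $\calH$: as $a_t\to b_t=q$ the involutions converge to the constant map onto $q$, so over such a fiber the fiberwise-defined map contracts the whole fiber and is not biregular, hence cannot descend to a projective involution. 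You therefore must prove that the double cover $\calH\to\Gamma^\perp$ is branched \emph{only} at the nodes $n_i$, i.e.\ that the $3d-4$ imposed tangencies exhaust all $2g(\calH)+2$ branch points. This Hurwitz count is the real content of the tangency hypothesis, and you do not carry it out.

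Carrying it out exposes a further problem you inherit from the statement: a smooth member of $|\Sigma+(m-1)(e_0-e_{3d-3})|$ has class $(3m-2)e_0-\sum_{i=1}^{3d-4}e_i-(3m-4)e_{3d-3}$ and genus $3m-4$, hence only $6m-6$ branch points, whereas the hypothesis demands $3d-4=6m-4$ tangencies; the count closes, and the unwanted tangent fibers disappear, only for the class $\Sigma+m(e_0-e_{3d-3})$, a curve of degree $3m-1$ with a point of multiplicity $3m-3$ at $p_0$ and genus $3m-3$ --- which is precisely the hyperelliptic curve whose de Jonqui\`eres involution has the homaloidal system $C^{3m-1}(p_0^{3m-2},p_1,\ldots,p_{3d-4})$ appearing in the paper's proof. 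The paper avoids your local analysis altogether by exhibiting $\tau_\calP$ as a plane Cremona involution whose fundamental points are exactly the set $\calP$, so that its lift to $\Bl_\calP(\bbP^2)$ is automatically biregular; if you prefer your fiberwise construction, you need the branch-point count above, and your local model at $n_i$ should be presented as the linearization rather than an exact formula. Two smaller points: the claim that $\tau$ ``fixes $\Gamma$ pointwise'' is unjustified (harmonic conjugation on a residual conic $C_t$ has no reason to fix the two points of $C_t\cap\Gamma$) and is not needed; and the descent to a projective involution is obtained most cleanly from $\tau_\calP^*[H]=[H]$, which follows at once from your formulas since $[H]-[\calH]$ is a multiple of the fiber class, rather than from the fiber-preservation argument you give.
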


\begin{proof} The points $p_i$ are the intersection points of $\calH_{3m-2}$ with its polar curve $
P_{p_0}(\calH_{3m-1})$. Let $\tau_\calP$ be the de Jonqui\`eres involution $\dJ_{\calH_d,p_0}'$ associated with the curve 
$\calH_{3m-1}$ which fixes the curve pointwise. 
The homaloidal linear system defining $\tau_\calP$ consists of curves 
$C^{3m-1}(p_0^{3m-2},p_1,\ldots,p_{3d-4})$. The exceptional curves of $\tau_\calP$
are the lines $\la p_0,q_i\ra$, and the first polar curve $P_{p_0}(\calH_d)$. The latter is the unique monoidal curve 
$C^{3m-2}(p_0^{3m-3},p_1,\ldots,p_{3d-4})$ that is tangent to the lines $\la p_0,p_i\ra$ at the points $p_i$. 
The exceptional lines 
$\la p_0,p_i\ra$ are blown down to the points $p_i$, and the first polar is blown down to $p_0$.
The birational transformation lifts to a biregular involution $\tau_\calP$. It sends exceptional curve 
$E_0$ over $p_0$ to the exceptional curve $E'$ equal to the proper transform of $P_{x_0}(\calH_d)$. 
It swaps the irreducible components of fibers of the conic bundle $\pi:\tilde{\Phi}_d\to \bbP^1$. 
it follows that it acts on the geometric basis 
$(e_0,e_1,\ldots,e_{3d-3})$ corresponding to the blow-up by 
\begin{eqnarray}\label{involution1}
e_0&\mapsto& (3m-1)e_0-\sum_{i=1}^{3d-4}e_i-(3m-2)e_{3d-3},\\ \notag
e_i&\mapsto& e_0-e_i-e_{3d-3},\\ \notag
e_{3d-3}&\mapsto&(3m-2)e_0-\sum_{i=1}^{3d-4}e_i-(3m-3)e_{3d-3}.
\end{eqnarray}
Substituting this in \eqref{specialsection}, we obtain that $E$ is mapped to $E^{\dag}$. This implies that 
the birational transformation of $\Phi_d$ induced by $\tau_\calP$ sends $S$ to $S^\dag$. 

Using \eqref{involution1}, we check that $\tau_\calP$ sends $H \sim 2m e_0-\sum_{i=1}^{3d-4}-(2m-2)e_{3d-3}$ to itself.
Thus $\tau_\calP$ descends to a projective transformation of $\Phi_d$. Since it leaves invariant $2^{6m-4}$ pairs 
$S+S^{\dag}$ cut out by monoidal surfaces of degree $m-1$ with $\Gamma$ as its $(m-3)$-multiple line, 
we see that the involution does not depend on $\calP$.
\end{proof}

\begin{remark} Assume $m\ge 2$. The plane curve $\calH_{3m-2}$ is the image of a smooth hyperelliptic curve of genus 
$g = 3m-3$ 
under a map given by the linear system 
$|g_1^2+q_1+\cdots+q_{3m-3}|$, where $q_i$ correspond to the branches of of $\calH_d$ at $p_0$. The moduli space of 
point sets $\{p_0,p_1,\ldots,p_{6m-4}\}$ realized as the singular point of $\calH_{d}$ and its Weierstrass points 
is of dimension $2g-1+3m-3 = 9m-10$. It is of codimension $12m-14 -(9m-10) = 3m-4$ in the moduli space of $6m-3$ points in the plane.
For example, the moduli space of quartic submonodidal surfaces contains a subvariety of codimension two 
parameterizing surfaces for which there exists a birational involution which swaps special conic sections with their duals.
\end{remark}

Let $\calS_{\Gamma}(\Phi_d)$ be the closure of the poles of $\Gamma$ with respect to smooth residual conics $C_\Pi$ in the planes 
$\Pi\in \Gamma^\perp$. We call it the \emph{satellite polar curve} of $\Phi_d$ with respect to $\Gamma$.

We define a birational transformation $\Theta_\Gamma$ of $\bbP^3$ uniquely determined by the property 
that its restriction to any plane $\Pi\in \Gamma^\perp$ coincides with the 
the de Jonqui\`eres transformation defined by reflection with respect to the line 
$\Gamma\subset \Pi$. It is the transformation $\dJ_{C,s}$, where $C$ is the residual conic 
in the plane and $s = \Pi\cap \calS_{\Gamma}(\Phi_d)$. Its restriction to any line $\ell$ in $\Pi\in \Gamma^\perp$ that passes through 
$s$ coincides with the involution $\sigma_{a,b;s} = \sigma_{a,b;g}$, where 
$g = \ell\cap \Gamma$m and $a,b$ are the residual intersection points of $\ell$ with $\Phi_d$. The transformation $\Theta_\Gamma$ fixes $\calS_{\Gamma}(\Phi_d)$ and 
$\Gamma$ pointwise.

\begin{prop} Fix a blow-up model $\tilde{\Phi}_d\cong \Bl_\calP(\bbP^2)$ of $\Phi_d$. Assume that $\Sigma$ is 
irreducible. The restriction $\theta_\Gamma$ of the birational involution $\Theta_\Gamma$ to $\Phi_d$ is defined by 
the de Jonqui\`eres involution associated with the curve $\Sigma$ and its $(d-3)$-multiple point $p_0$.
\end{prop}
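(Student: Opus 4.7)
The plan is to verify the equality of the two birational involutions fiber-by-fiber over the conic bundle structure on $\tilde{\Phi}_d$, using the elementary fact that a nontrivial involution of $\bbP^1$ is uniquely determined by its pair of fixed points. First I would check that both $\theta_\Gamma$ and the birational involution on $\Phi_d$ induced by $\dJ_{\Sigma,p_0}$ preserve the conic bundle. Since $\Theta_\Gamma$ fixes $\Gamma$ pointwise, it sends each plane $\Pi\in\Gamma^\perp$ to itself and hence preserves the residual conic $C\subset\Pi$. On the blow-up side, $\dJ_{\Sigma,p_0}$ preserves each line through $p_0$ by its very definition, and under the identification of the pencil of lines through $p_0$ with the conic bundle via $|e_0-e_{3d-3}|$, the strict transform of such a line is a fiber of $\tilde{\pi}$ that maps birationally (and for generic $\Pi$, isomorphically) onto the corresponding $C$.

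Next I would compute the fixed points of each involution on a generic fiber. The reflection $\Theta_\Gamma|_\Pi$ is the involution of $\Pi$ determined by fixing $\Gamma$ pointwise and fixing the pole $s$ of $\Gamma$ with respect to $C$; its restriction to $C\cong\bbP^1$ is therefore the nontrivial involution whose two fixed points are the pair $\{a,b\}=C\cap\Gamma$. On a general line $\ell$ through $p_0$, the definition gives $\dJ_{\Sigma,p_0}|_\ell=\sigma_{a',b'}$, where $\{a',b'\}=(\ell\cap\Sigma)\setminus\{p_0\}$ are the two residual intersection points, and these are exactly the fixed points of $\sigma_{a',b'}$. By Theorem \ref{thm:5.3}, $\Sigma$ is the unique curve on $\tilde{\Phi}_d$ whose image under $\tilde{\Phi}_d\to\Phi_d$ is $\Gamma$, so under the induced isomorphism from the fiber of $\tilde{\pi}$ to $C$ the pair $\{a',b'\}$ is identified with $\{a,b\}$.

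The restrictions of the two involutions to the generic fiber of the conic bundle thus share the same pair of distinct fixed points, and a nontrivial involution of $\bbP^1$ is determined by its fixed points; hence they agree on every generic fiber. Consequently $\theta_\Gamma$ and $\dJ_{\Sigma,p_0}$ coincide as birational involutions of $\Phi_d$. The main subtle point is really the fiberwise identification of the two pairs of fixed points; this requires tracing through the birational model, in particular the divisor-class formulas \eqref{H} and \eqref{Delta'}, which record that $\Sigma$ pushes down to the degree $d-1$ curve with $(d-3)$-multiple point at $p_0$ whose image is $\Gamma$. The irreducibility assumption on $\Sigma$ is used precisely to make $\dJ_{\Sigma,p_0}$ a well-defined de Jonqui\`eres involution.
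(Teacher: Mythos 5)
Your proposal is correct and follows essentially the same route as the paper: identify the lines through $p_0$ with the residual conics, observe that $\Sigma$ maps to $\Gamma$, and match the two fixed points of each involution on a general fiber ($\ell\cap\Sigma$ residual to $p_0$ on one side, $C\cap\Gamma$ on the other), using that a nontrivial involution of $\bbP^1$ is determined by its fixed points. Your write-up is somewhat more explicit than the paper's about the fiberwise identification, but there is no substantive difference in method.
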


\begin{proof}  Let $\phi:\bbP^2\da \Phi_d$ be the rational 
parameterization of $\Phi_d$ defined by the blow-up construction. The image of the line 
$\ell_p = \la p,p_0\ra$ in $\Phi_d$ is the residual conic $C_\Pi$ in the plane $\Pi = \la \phi(p),\Gamma\ra$. 
The image of $\Sigma$ is the line $\Gamma$. The restriction of $\Theta_\Gamma$ to the line $\ell_p$ is an involution with 
two fixed points from $\Sigma\cap \ell_p$. This shows that $\Theta_\Gamma$ defines an birational involution of the conic 
$C_\Pi$ with fixed point from $\Gamma\cap C_\Pi$. This coincides with the restriction of the 
reflection involution to the conic.
\end{proof}
Let us find an explicit parametric equation of $\calS_\Gamma(\Phi_d)$.

\vskip3pt

 We use equation \eqref{eq:Phi} of $\Phi_d$. Its intersection with a general plane 
 $\Pi_{[t_0,t_1]} = V(x_1t_0-x_0t_1)\in \Gamma^\perp$ is equal to the pole of the line 
 $\Pi_{[t_0,t_1]}= V(y_0)$ with respect to the conic $C_{[t_0,t_1]}$ given by \eqref{eq:conicbundle}. It is equal to the intersection point 
 of the polar lines of the conic with respect to the points $[0,0,1]$ and $[0,1,0]$.
 \begin{eqnarray}\label{polar}
 &B(t_0,t_1)y_0+D(t_0,t_1)y_1+E(t_0,t_1)y_2 = 0,\\  \notag
 &C(t_0,t_1)y_0+E(t_0,t_1)y_1+F(t_0,t_1)y_2 = 0.
  \end{eqnarray}
 The intersection point has coordinates 
 $$[y_0,y_1,y_2] = [\Delta_1(t_0,t_1),\Delta_2(t_0,t_1),\Delta_3(t_0,t_1)],$$
 where $\Delta_i$ are the signed maximal minors of the matrix of the coefficients of this system of linear equations 
 in $y_0,y_1,y_2$.
 
 The image of the corresponding point $([t_0,t_1],[y_0,y_1,y_2])\in \bbP$ to $\bbP^3$ is equal to 
 \beq\label{satellite}
[y_0t_0,y_0t_1,y_1,y_2] = [t_0\Delta_1,t_1\Delta_1,\Delta_2,\Delta_3].
\eeq
 After cancelling by a common divisor, we get a rational parameterization of the satellite polar curve 
 $\calS_\Gamma(\Phi_d)$.

 It follows from \eqref{discriminant} that the curve $\calS_\Gamma(\Phi_d)$ passes through
 the singular points of the residual conics. It also passes through the points where they intersect 
 $\Gamma$ with multiplicity $2$. 
 
 We expect that, in general, $\calS_\Gamma(\Phi_d)$ is a curve of degree $2d-3$ that 
contains $3d-4$ singular points of residual conics 
and $2d-4$ points in the small discriminant $\calD_2$ in $\Gamma$. 
  
 \begin{example} The polar curve $\calP_\Gamma(\Phi_d)$ could be reducible. This happens when the binary forms 
 in the parametric equation have common factors. For example, consider a cubic surface
 $$2x_0^2x_2+2x_1^2x_3+x_0x_2^2+x_1x_3^2 = 0.$$
  The surface
has two Eckardt points $[0,0,1,0]$ and $[0,0,1,0]$ on $\Gamma = V(x_0,x_1)$. 
The pencil of polar quadrics is given by 
$$t_0 (x_0^2+x_0x_2)+t_1 (x_1^2+x_1x_3) = 0.$$
Its base locus is the union of four lines $V(x_0(x_0+x_2),x_1(x_1+x_3))$. The residual part consists of 
three lines $V(x_0,x_3), V(x_1,x_2)$ and $V(x_0+x_2,x_1+x_3)$. The latter line is given by parametric equation 
$[t_0,t_1,t_0,t_1]$. The first two of the three lines are irreducible components of the 
intersection of the surface with the two tritangent planes. The third line coincides with 
$\calS_\Gamma(\Phi_3)$. It  does not lie on the surface and joins 
the singular points of three remaining reducible residual conics. 
 \end{example}

 \vskip5pt
\begin{prop} The involution $\Theta_\Gamma$ is given by the formula
\begin{eqnarray*}
x_0&\mapsto& -x_0\Delta_1(x_0,x_1),\\
x_1&\mapsto& -x_1\Delta_1(x_0,x_1),\\
x_2&\mapsto& x_2\Delta_1(x_0,x_1)-2\Delta_2(x_0,x_1),\\
x_3&\mapsto&  x_3\Delta_1(x_0,x_1)-2\Delta_3(x_0,x_1).
\end{eqnarray*}
\end{prop}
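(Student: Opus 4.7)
The plan is to compute $\Theta_\Gamma$ plane by plane: restricted to each plane $\Pi_{[t_0,t_1]} \in \Gamma^\perp$ it is the de Jonqui\`eres reflection $\dJ_{C_{[t_0,t_1]}, s}$ fixing $\Gamma$ pointwise and fixing the pole $s = \Pi \cap \calS_\Gamma(\Phi_d)$. In the blow-up coordinates $(y_0,y_1,y_2,t_0,t_1)$, one has $\Gamma = V(y_0)$, the residual conic $C_{[t_0,t_1]}$ is $V(q)$ with
$$q = Ay_0^2 + 2By_0 y_1 + 2Cy_0 y_2 + Dy_1^2 + 2Ey_1 y_2 + Fy_2^2,$$
and the pole $s$ has coordinates $\alpha = (\Delta_1,\Delta_2,\Delta_3)$ by \eqref{polar}. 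The reflection is the involution $v \mapsto q(\alpha)\,v - 2\,b_q(v,\alpha)\,\alpha$, where $b_q$ polarises $q$ (the factor $2$ being forced by the involution property and by the requirement that $\alpha$ itself be fixed).

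The crux is a pair of simplifications that eliminate $B, C, D, E, F$ from the formula. I would write $b_q(v, \alpha)$ as a linear form in $y_0, y_1, y_2$; its $y_1$- and $y_2$-coefficients are $B\Delta_1 + D\Delta_2 + E\Delta_3$ and $C\Delta_1 + E\Delta_2 + F\Delta_3$, both of which vanish identically by the defining equations \eqref{polar} of $\alpha$. Hence $b_q(v,\alpha) = P(t_0,t_1)\cdot y_0$ and similarly $q(\alpha) = b_q(\alpha,\alpha) = \Delta_1 \cdot P(t_0,t_1)$, where $P$ is the discriminant polynomial of \eqref{discriminant}. Substituting and cancelling the common factor $P$ leaves the plane-wise formula
$$y_0 \mapsto -\Delta_1 y_0, \quad y_1 \mapsto \Delta_1 y_1 - 2\Delta_2 y_0, \quad y_2 \mapsto \Delta_1 y_2 - 2\Delta_3 y_0.$$

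It remains to translate back to $\bbP^3$. On $\Pi_{[t_0,t_1]}$ one may take $t_0 = x_0,\, t_1 = x_1,\, y_0 = 1,\, y_1 = x_2,\, y_2 = x_3$; then the projection $\sigma\colon ([y_0,y_1,y_2],[t_0,t_1]) \mapsto [t_0 y_0,\, t_1 y_0,\, y_1,\, y_2]$ sends the transformed point to
$$[-x_0 \Delta_1(x_0,x_1),\ -x_1 \Delta_1(x_0,x_1),\ \Delta_1 x_2 - 2\Delta_2,\ \Delta_1 x_3 - 2\Delta_3],$$
which is exactly the displayed formula. The main obstacle is nothing deep: it is purely bookkeeping, namely verifying the two vanishings by expanding the $2\times 2$ minors $\Delta_i$ and confirming that the scalar $P(t_0,t_1)$ cancels uniformly across all four components, so that the plane-wise reflections glue into a single well-defined rational map of $\bbP^3$.
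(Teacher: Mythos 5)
Your proof is correct and follows essentially the same route as the paper: restrict to a plane $\Pi_{[t_0,t_1]}$, write the reflection through the pole $\alpha=(\Delta_1,\Delta_2,\Delta_3)$ as $v\mapsto q(\alpha)v-2b_q(v,\alpha)\alpha$, use the polar equations \eqref{polar} to get $b_q(v,\alpha)=P(t_0,t_1)y_0$ and $q(\alpha)=\Delta_1 P(t_0,t_1)$, cancel $P$, and translate back via $\sigma$. The only difference is that you spell out the cancellation of the discriminant factor explicitly, which the paper leaves implicit in the phrase ``substituting the coordinates \ldots we find.''
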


\begin{proof} The restriction of $\Theta_\Gamma$ to a general plane $\Pi_{[t_0,t_1]}$ is equal to the  
the reflection involution 
$$y = [y^*] = [y_0,y_1,y_2] \mapsto [q_t(p_0^*)y^*-2b_{t}(y,p_0^*)p_0^*],$$
where $p_0 = [p_0^*]$ is the pole of the line $V(y_0)$ with respect to the conic $C_{[t_0,t_1]} = V(q_t)$, 
and $b_t(v,w) = \half (q_t(v+w)-q(v)-q(w))$ is the polar symmetric bilinear form associated with $q$. 
Substituting the coordinates $p_0^* = (\Delta_1,\Delta_2,\Delta_3)$ in the equation of the 
conic \eqref{eq:conicbundle}, we find that the restriction of $\Theta_\Gamma$ to the plane is given by
$$[y_0,y_1,y_2]\mapsto (-y_0\Delta_1,y_1\Delta_1-2y_0\Delta_2,y_2\Delta_1-2y_0\Delta_3].$$
Using the translation of coordinates in $\bbP$ and in $\bbP^3$, we get the asserted formula.
\end{proof}

By inspection of the formula, we find that the algebraic degree of $\Theta_\Gamma$ for a general $\Phi_d$ 
is equal to 
$2d-3$. Its indeterminacy locus is equal to the union of $\Gamma$ and $3d-4$ singular points of the residual conics.
Its exceptional divisor is equal to the union of the  planes containing $2d-4$ residual 
conics that are tangent to $\Gamma$. They are blown down to the line $\Gamma$. The fixed locus of 
$\Theta_\Gamma$ is equal to the satellite polar curve.

\begin{prop}  The involution $\Theta_\Gamma'$ is given by the formula
\begin{eqnarray*}
x_0&\mapsto& x_0(F_d(x_0,x_1,x_2,x_3)\Delta_1(x_0,x_1)-P(x_0,x_1)),\\
x_1&\mapsto& x_1(F_d(x_0,x_1,x_2,x_3)\Delta_1(x_0,x_1)-P(x_0,x_1)),\\
x_2&\mapsto& F_d(x_0,x_1,x_2,x_3)\Delta_2(x_0,x_1)-P(x_0,x_1)x_2,\\
x_3&\mapsto& F_d(x_0,x_1,x_2,x_3)\Delta_3(x_0,x_1)-P(x_0,x_1)x_3.
\end{eqnarray*}
where $\Phi_d = V(F_d)$.
\end{prop}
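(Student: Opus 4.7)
The plan is to mirror the argument that produced the formula for $\Theta_\Gamma$ in the preceding proposition, only this time restricting to the involution on each line through the satellite pole that fixes $\Phi_d$ rather than the one that fixes $\Gamma \cup \calS_\Gamma(\Phi_d)$. First I would argue that $\Theta_\Gamma'$ is characterized, on a general plane $\Pi = \Pi_{[t_0,t_1]} \in \Gamma^\perp$, by the property that its restriction to every line $\ell \subset \Pi$ through the satellite pole $s = \Pi \cap \calS_\Gamma(\Phi_d)$ is the involution $\sigma_{a,b}$ with fixed points the residual intersections $a, b$ of $\ell$ with $\Phi_d$. This is forced by the requirements that $\Theta_\Gamma'$ fix $\Phi_d$ pointwise and commute with $\Theta_\Gamma$; commutation is automatic because $\{a,b\}$ is harmonic to $\{g,s\}$, where $g = \ell \cap \Gamma$, by the pole-polar relation defining $s$.

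Next I would write the residual conic as $C = V(q)$ with $q$ the quadratic form appearing in \eqref{eq:conicbundle}, and put $s = [\alpha]$ for $\alpha \in \bbC^3$. The plan is to verify that on $\Pi$ the involution $\sigma_{a,b}$ is given by
$$v \longmapsto q(v)\,\alpha - b_q(v,\alpha)\, v,$$
where $b_q$ is the polarization of $q$. Indeed, if $q(v)=0$ this formula returns $-b_q(v,\alpha)\,v$, so every point of $C$ is a projective fixed point; and a direct expansion shows $\sigma_{a,b}^{\,2}(v) = b_q(v,\alpha)\bigl[q(v)q(\alpha) - b_q(v,\alpha)^2\bigr]\, v$, which is a scalar multiple of $v$. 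Hence the formula defines the unique projective involution of $\Pi$ fixing $C$ pointwise, and its restriction to any line through $\alpha$ is $\sigma_{a,b}$ as desired.

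Now I would pass to $\bbP^3$-coordinates by taking $(t_0,t_1) = (x_0,x_1)$ and $(y_0,y_1,y_2) = (1,x_2,x_3)$, so that the point $(x_0,x_1,x_2,x_3)$ lies in the plane $\Pi_{[x_0,x_1]}$. Substituting into $q$ recovers $F_d(x_0,x_1,x_2,x_3)$ by the very construction of \eqref{eq:conicbundle}. The essential step is to identify $b_q(v,\alpha)$ with the discriminant $P(x_0,x_1)$ of \eqref{discriminant} when $\alpha = (\Delta_1,\Delta_2,\Delta_3)$. Expanding
$$b_q(v,\alpha) = \Delta_1(A + Bx_2 + Cx_3) + \Delta_2(B + Dx_2 + Ex_3) + \Delta_3(C + Ex_2 + Fx_3)$$
and invoking the polar equations \eqref{polar} that define the $\Delta_i$, namely $B\Delta_1+D\Delta_2+E\Delta_3 = 0$ and $C\Delta_1+E\Delta_2+F\Delta_3 = 0$, the $x_2$ and $x_3$ contributions cancel. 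What remains is $A\Delta_1 + B\Delta_2 + C\Delta_3$, which is exactly the Laplace expansion along the first row of the determinantal formula \eqref{discriminant} for $P$. This identification $b_q(v,\alpha) = P(x_0,x_1)$ is the one step of the proof with any real content; the rest is bookkeeping.

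Finally I would lift back to $\bbP^3$. Because $\Theta_\Gamma'$ preserves the plane $\Pi_{[x_0,x_1]}$, the images $x_0', x_1'$ differ from $x_0, x_1$ only by the common factor $y_0' = F_d\Delta_1 - P$, while $x_2' = y_1'$ and $x_3' = y_2'$ are read off directly from the plane formula as $F_d\Delta_2 - Px_2$ and $F_d\Delta_3 - Px_3$. This yields the stated expressions for $\Theta_\Gamma'$ and simultaneously shows it is an involution and fixes $\Phi_d = V(F_d)$ pointwise (since on $F_d = 0$ the map collapses to $-P$ times the identity on $(x_0,x_1,x_2,x_3)$).
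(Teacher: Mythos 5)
Your proof is correct and follows essentially the same route as the paper's: restrict to a plane of $\Gamma^\perp$, take the pole $\alpha = (\Delta_1,\Delta_2,\Delta_3)$ of $\Gamma$ with respect to the residual conic, and reduce everything to the cofactor identity $b_q(v,\alpha) = y_0P(t_0,t_1)$ — the only difference being that the paper derives the planar formula from the harmonic-conjugacy condition on binary quadratics, whereas you write down $v\mapsto q(v)\alpha - b_q(v,\alpha)v$ and verify directly that it is an involution fixing $C$ pointwise and preserving lines through $\alpha$, which amounts to the same thing. One small wording correction: that planar map is a quadratic birational (de Jonqui\`eres) involution of $\Pi$, not a \emph{projective} one — a projectivity fixing a smooth conic pointwise is the identity.
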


 \begin{proof} We use the notation from the proof of the previous proposition. The restriction of $\Theta_\Gamma$ to to a general plane $\Pi_{[t_0,t_1]}$ is equal to the  
the de Jonqui\`eres involution that fixes the conic $C_{[t_0,t_1]}$. 
 For a general point $y$ in the plane,
the involution sends the point $y$ to the unique point $y'$ on the line $\ell_y = \la y,p_0\ra$ such that the pair of points 
$\{y,y'\}$ is harmonically conjugate to the pair $\{a,b\} = \ell_y\cap C_{[t_0,t_1]}$.
Recall that, if we fix projective coordinates $(u,v)$ on the line $\ell_y$ such that 
$\{a,b\} = V(\alpha u^2+2\beta uv+\gamma v^2)$ and 
$\{y,y'\} = V(\alpha' u^2+2\beta' uv+\gamma' v^2)$, then the condition of harmonic conjugacy is 
$\alpha\gamma'-2\beta\beta'+\gamma\alpha' = 0$ \cite[2.1.2]{CAG}. This allows us to find the explicit formula.
The line
$\ell_y:[uy^*+vp_0^*]$
intersects the conic $C_{[t_0,t_1]}= V(q_t)$ at the points 
\beq
\{a,b\} = V(u^2q_t(y^*)+2uvb_t(y^*,p_0^*)+v^2q_t(p_0^*))
\eeq
Since $y$ has coordinates $[u,v] = [1,0]$, 
$$\{y,y'\} = V(v(2\beta'u+\gamma'v)).$$ 
Hence $y'$ is determined by the condition 
$q_t(y^*)\gamma'-2b_t(y^*,p_0^*)\beta' = 0$. The coordinates $[u,v]$ of $y'$ are equal to 
$[\gamma',-2\beta'] = [b_t(y^*,p_0^*),-q_t(y^*)]$. This gives 
$$y' = b_t(y^*,p_0^*)y^*-q_t(y^*)p_0^*.$$
Let $A$ be the matrix from \eqref{discriminant}. We have $b_t(y^*,p_0^*) = y^*\cdot A\cdot p_0^* = y_0|A| = 
y_0P(t_0,t_1)$.  This gives
{\Small 
 $$y'= [y_0^2P(t_0,t_1)-q_t(y^*)\Delta_1(t_0,t_1),y_0y_1P(t_0,t_1)-q_t(y^*)\Delta_2(t_0,t_1),
y_0y_2P(t_0,t_1)-q_t(y^*)\Delta_3(t_0,t_1)]
$$}
where $P(t_0,t_1)$ is the discriminant of the conic bundle. Going back to the coordinates in $\bbP^3$, we get the asserted formula.
 
 \end{proof} 
 
 By inspection of the formulas, we find that the algebraic degree of $\Theta_\Gamma'$ for general $\Phi_d$ is equal 
 to the union of $\Gamma$, the satellite polar curve $\calS_\Gamma(\Phi_d)$, and the singular 
 residual conics.
 Its exceptional divisor consists of the union of planes spanned by singular conics and the planes that contain an irreducible component 
 of a singular conic and tangent to $\calS_\Gamma(\Phi_d)$ at the singular point of the conic. The degree of the jacobian is equal 
 to $4(3d-4)$, and it is equal to $2(3d-4)+(3d-4)+(3d-4)$. The closure of the locus of fixed points on the domain of the definition 
 of  $\Theta_\Gamma'$ is equal to 
 $\Phi_d$.

 \begin{remark} Instead of taking the satellite polar curve $\calS_\Gamma(\Phi_d)$ intersecting a general plane 
$\Pi\in \Gamma^\perp$ at one point,
one can take a special section $S$ of the conic bundle. The Cremona involution 
$\sigma_{\Phi_d,S}$ is uniquely defined by the property that its restriction to 
any line contained in $\Pi\in \Gamma^\perp$ and passing through the point $S\cap  \calS_\Gamma(\Phi_d)$ 
coincides with the involution $\sigma_{a,b}$ that fixes the intersection points of the line 
with $\Phi_d$ different from $s$ and a point on $\Gamma$. Similarly, we define the involution 
$\sigma_{\Phi_d,S}'$ that swaps the points $a$ and $b$.  

For example, a choice of a line on a smooth cubic surface $\Phi$ defines sixteen Cremona involutions
$\sigma_{\Phi,S}$, and sixteen Cremona involutions
$\sigma_{\Phi,S}'$. 
\end{remark}

\begin{remark} We do not know whether the natural homomorphism 
$$\textrm{Bir}(\bbP^3,\Phi_d)\to \textrm{Bir}(\Phi_d)$$ 
is surjective, nor do we know its kernel. The surjectivity is known 
in the case $d\le 4$ because the surfaces are Cremona equivalent to a plane \cite{Mella}.  
Even if the map is surjective, we do not know whether any birational involution lifts to a birational involution of $\bbP^3$.
\end{remark}

 \section{The satellite polar variety} 
 The construction of the Cremona involutions and the satellite polar curve $\calS_\Gamma(\Phi_d)$ 
 can be extended to a submonoidal hypersurface $\Phi_d^n$ in $\bbP^{n+1}$ which contains
 a linear subspace $\Gamma$ of codimension $m > 1$ with multiplicity $d-2$. It is  
given by equation
{\small $$A_d(x_0,\ldots,x_{m-1})+2\sum_{i=m}^{n+1}l_i(x_0,\ldots,x_{m-1})x_{i}+
\sum_{i,j =m}^{n+1}q_{ij}(x_0,\ldots,x_{m-1})x_{i}x_{j}= 0,
$$}
where $l_i$ (resp. $q_{ij} = q_{ji}$) are homogeneous polynomials of degree $d-1$ (resp. $d-2$).
The residual quadrics are cut out by the linear system $\Gamma^\perp$ of dimension $m$  of $n-m+1$-dimensional 
linear subspaces containing $\Gamma$. 

As in the case of surfaces, we can define the \emph{polar variety} $\calP_\Gamma(\Phi_d^n)$ to be the residual part 
of  $(d-2)\Gamma$ in $\cap_{x\in \Gamma}P_x(\Phi_d^n)$. Its irreducible 
component $\calS_\Gamma(\Phi_d)$ equal to the closure of singular points of quadrics $Q_t\subset \Pi_t, t\in \Gamma^\perp,$ of corank 1 is called the 
\emph{satellite polar variety}.

Let us find its parametric equation. First we write $\Pi_t$ as the span of $\Gamma$ and a point
$[t_0,\ldots,t_{m-1},0,\ldots,0]$. 
This gives the coordinates $[y,x_{m},\ldots,x_{n+1}]$ in $\Pi_t$ such that any point in  $\Pi_t$  has coordinates $
[t_0y,\ldots,t_{m-1}y,x_{m},\ldots,x_{n+1}]$ in $\bbP^{n+1}$. Substituting this in equation of $\Phi_d^n$, we obtain
the equation of the residual quadric $Q_t$
\beq
{\small A_d(t_0,\ldots,t_{m-1})y^2+2y\sum_{i=m}^{n+1}l_i(t_0,\ldots,t_{m-1})x_{i}+
\sum_{i,j=m}^{n+1}q_{ij}(t_0,\ldots,t_{m-1})x_{i}x_{j} = 0.}
\eeq
The equation of $\Gamma$ in $\Pi_t$ is $y = 0$. 
Let $[y_0,\alpha_m,\ldots,\alpha_{n+1}]$ be the coordinates of the pole $P_t$ of $Q_t$ 
with respect to the hyperplane $\Gamma\subset \Pi_t$. Then polar hyperplane of $P_t$ with respect to the quadric $Q_t$
is the hyperplane $\Pi_t = V(y)$. This gives a system of linear equatih unknowns $y_0,\alpha_m,\ldots,\alpha_{n+1}$ 
\beq
A= \begin{pmatrix}l_m(t_0,\ldots,t_{m-1})&q_{mm}(t_0,\ldots,t_{m-1})&\ldots&q_{mn+1}(t_0,\ldots,t_{m-1})\\
\vdots&\vdots&\vdots&\vdots\\
l_{n+1}(t_0,\ldots,t_{m-1})&q_{n+1m}(t_0,\ldots,t_{m-1})&\ldots&q_{n+1n+1}(t_0,\ldots,t_{m-1}\end{pmatrix}
\eeq
defined by the partial derivatives of $Q_t$ with respect to the variables $y,x_{m},\ldots,x_{n+1}$.

This gives us a rational parameterization of $\calS_\Gamma(\Phi_d)$
\beq
\mathfrak{r}: \bbP^{m-1}\da \calS_\Gamma(\Phi_d), 
\eeq
given by 
\beq  [x_0,\cdots,x_{n+1}] = [t_0\Delta_{1},\cdots,t_{m-1}\Delta_{1},-\Delta_{2},\cdots,(-1)^{n+2-m}\Delta_{n+3-m}],
\eeq 
where $\Delta_{i}$ is the maximal minor of the matrix $A$ obtained by deleting the $i$th column. 
It is of degree equal to $(n+2-m)(d-2)+1$.

 Since the codimension of the space of matrices of size
$(n-m+2)\times (n-m+3)$ and corank $\ge 1$ is equal to $2$, the subvariety of $\Gamma^\perp$ given by vanishing of the 
minors $\Delta_m.\ldots,\Delta_{n+1}$ is expected to be of codimension $2$ in $\bbP^{m-1}$. 
So, if $m\ge 3$, the map $\mathfrak{r}$ is not a regular map.

 As in the case of submonoidal surfaces, we can define two commuting birational transformations of 
$\bbP^{n+1}$ that leaves $\Phi_d^n$ invariant and restricts to the identity on  
$\calS_\Gamma(\Phi_d^n)$.

Let  $\Theta_\Gamma$ (resp. $\Theta_\Gamma'$) be the birational involution of $\bbP^{n+1}$ uniquely defined by the property that its restriction 
to any line in $\Pi_t$ that passes through the point  $s = \Pi_t\cap \calS_\Gamma(\Phi_d^n)$ is the involution 
that interchanges (resp. fixes) the intersection points of the line with the residual quadric $Q_t$.j

The restriction of $\Theta_\Gamma$ (resp. $\Theta_\Gamma'$) to $\Phi_d$ is a non-trivial 
birational involution $\theta_\Gamma$ of $\Phi_d^n$
(resp. the identity).

\begin{example} In the case $m = n+1$, i.e. $\Gamma$ is a point, the satellite polar variety $\calS_\Gamma(\Phi_d^n)$ coincides with 
the monoidal polar hypersurface $P_\Gamma(\Phi_d^n)$ of degree $d-1$. The residual conics are just pairs of points, the 
fibers of the projection $\Phi_d^{n}$ from the point $\Gamma$. The polar hypersurface passes through all ramification points.
 A general line in $\bbP^{n+1}$
passing through $\Gamma$ intersects $\calS_\Gamma(\Phi_d^n)$ at one point $s$ different from $\Gamma$, 
the ramification point of the projection 

The involution $\Theta_\Gamma$ is defined by the projection that its projection to any 
line $\ell$ passing through $\Gamma$ coincides with the involution $\sigma_{a,b,s} = \sigma_{a,b;\Gamma}$, where 
$s$ is the residual point of the intersection of $\ell$ with $\Phi_d^n$ and $a,b$ are the residual intersection points of the line and $\Phi_d^n$. Its restriction to $\Phi_d^n$ is defined by the 
projection from the point $\Gamma$.

The involution $\Theta_\Gamma'$ is a de Jonqui\`eres 
transformation associated to a submonoidal hypersurface. Its restriction to 
$\ell$ coincides with the involution $\sigma_{a,b}'$. Both involutions were studied by M. Gizatullin in 
\cite{Gizatullin}. He gives the explicit formulas for these involutions as well as some relations between them.  
J. Blanc studied dynamical properties of the compositions of involutions $\Theta_\Gamma$ on a cubic hypersurface \cite{Blanc}.

\end{example}

\begin{example} Assume that $\Phi_d^n$ is a general $n$-dimensional cubic hypersurface $\Phi_3^n$ in $\bbP^{n+1}$ 
with a fixed linear subspace $\Gamma\subset \bbP^{n+1}$ of codimension $m$. It is known that the dimension 
of the variety of linear subspaces $\Gamma\subset \Phi_d^{n}$ of codimension $m$ in $\bbP^{n+1}$ is greater or 
equal 
than $(n+2-m)m-\binom{n-m+d+1}{d}$ \cite[Lecture 12]{Harris}. So, we can always find a subspace $\Gamma$ of codimension $m$ 
in $\Phi_3^n$ if $m$ satisfies $(n+2-m)m-\binom{n-m+4+d}{3}\ge 0$. Of course, we can always take $m= n+1$ or $n$, since a cubic hypersurface 
of any dimension always contains a line. The satellite variety $\calS_\Gamma(\Phi_3)$ is a rational variety 
of dimension $m-1$ given by a rational parameterization of degree $n-m+2$.

Following \cite{Gizatullin}, one defines the decomposition (resp. the inertia) group of $\Phi_d^n$ as the subgroup 
of the Cremona group $\Cr(n+1)$ that leaves $\Phi_d^n$ invariant (resp. restricts to the identity on $\Phi_d^n$).

It is known that in the case of surfaces, the group $\textrm{Bir}(\Phi_3)$ is generated by the transformations $\theta_\Gamma$, where 
$\Gamma$ is a point \cite{Manin}. Is it true for $d>3$? This would imply that the decomposition group is mapped surjectively onto 
$\textrm{Bir}(\Phi_d)$. I do not know whether 
the same is true in dimension greater than $2$, i.e. whether $\textrm{Bir}(\Phi_d^n)$ is generated by transformations 
$\theta_\Gamma$, and whether it is enough to take $\Gamma$ to be a point. Also, I do not know whether the inertial group 
of $\Phi_d^n$ is generated by transformations $\Theta_\Gamma'$.

\end{example}



\begin{thebibliography}{30}
\bibitem{Arrondo} E.~Arrondo, \textit{Line congruences of low order}. Milan J. Math. 70 (2002), 223–243
\bibitem{Blanc} J.~Blanc, \textit{Dynamical degrees of (pseudo)-automorphisms fixing cubic hypersurfaces}. 
Indiana Univ. Math. J. {\bf 62} (2013), no. 4, 1143-1164.

\bibitem{Beltrametti} M.~Beltrametti, E.~ Carletti, D.~Gallarati, G.~Bragadin, 
\textit{Lectures on curves, surfaces and projective varieties. A classical view of algebraic geometry}. Translated from the 2003 
 Italian original by Francis Sullivan. EMS Textbooks in Mathematics. 
 European Mathematical Society (EMS), Zürich, 2009.

\bibitem{DeopurkarPatel} A.~Deopurkar, A.~Patel, \textit{Orbits of linear 
series on the projective line}, arXiv2211.16603v1 [math.AG], 29 Nov. 2022.

\bibitem{Deserti} J.~D\'eserti, F.~ Han, \textit{On cubic birational maps of $\bbP^3$}. 
Bull. Soc. Math. France {\bf 144} (2016), no. 2, 217-249.

\bibitem{CAG} I. Dolgachev, \textit{Classical algebraic geometry. A modern view}, Cambridge Univ. Press, 2012.

\bibitem{Edge} W. Edge, \textit{The theory of ruled surfaces}, Cambridge Univ. Press, 1931.

\bibitem{Gizatullin}
M.~Gizatullin, \textrm{The decomposition, inertia and ramification groups in birational geometry}. Algebraic geometry and 
its applications (Yaroslavl, 1992), 39-45, 
Aspects Math., E25, Friedr. Vieweg, Braunschweig, 1994.


 \bibitem{Jessop} C.~Jessop, \textit{Quartic surfaces with singular points}, Cambridge Univ. Press. 1916. 

\bibitem{JessopLine} C. Jessop, \textit{A treatise of the line complex}, Cambridge University Press, 1903 [reprinted by Chelsea Publ. Co., New York, 1969]. 


\bibitem{Harris} J. Harris, \textit{Algebraic geometry. A first course},  Graduate Texts in Mathematics, 133. Springer-Verlag, New York, 1995.

\bibitem{Hartshorne} R.~Hartshorne, \textit{Algebraic geometry}. Graduate Texts in Mathematics, No. 52. 
Springer-Verlag, New York-Heidelberg, 1977.
 
 \bibitem{Helso} M.~Hels\o, \textit{Rational quartic symmetroids}. Adv. Geom. {\bf 20} (2020), no. 1, 71-89.

\bibitem{Hudson} H.~Hudson, \textit{Cremona transformations in plane and space}, Cambridge Univ. Press. 1927.

\bibitem{Kneser} M.~Kneser, \textit{Quadratische Formen}. Revised and edited in collaboration with Rudolf Scharlau. Springer-Verlag, Berlin, 2002

\bibitem{Kummer} E.~Kummer, \textit{\"Uber die algebraischen Strahlensysteme inbesondere \"uber
die ersten und zweiten Ordnung}, Abhandlungen der K\"oniglichen Akademie der Wissenschaften zu Berlin, 1866, 1-120.
[Collected Papers, ed. A. Weil, vol. 2, 453-572. Springer-Verlag. 1975].

\bibitem{Manin} Yu. Manin, \textit{Hypersurfaces cubiques. II. Automorphismes birationnels en dimension deux}. 
 Invent. Math. {\bf 6}  (1969), 334-352. 
\bibitem{Mella} M.~ Mella, \textit{Birational geometry of rational quartic surfaces}. 
J. Math. Pures Appl. (9) {\bf 141} (2020), 89-98.
\bibitem{Montesano} D.~Montesano, \textit{Su una classe di trasformazioni involutorie dello spazio}. 
Rendiconti Ist. Lombardo, {\bf 21} (188), 688-690.


\bibitem{Ran} Z.~ Ran, \textit{Surfaces of order 1 in Grassmannians}. 
J. Reine Angew. Math. 368 (1986), 119-126. 

\bibitem{Salmon} G.~Salmon, \textit{A treatise on the analytic geometry of three dimension}, Hodges, Foster and Co. Dublin. 1862. (5th edition, 
revised by R. Rogers, vol. 1-2, Longmans,  Green and Co., 1912-1915;  reprinted by Chelsea Publ. Co. vol. 1, 7th edition, 1857,   vol. 2,  5th edition, 1965).  

\bibitem{Sarkisov} V.~Sarkisov, \textit{Birational automorphisms of three-dimensional algebraic varieties 
representable as a conic bundle}.
Uspekhi Mat. Nauk 34 (1979), no. 4, 207-208.


\bibitem{Sturm} R.~Sturm, \textit{Die Gebilde ersten und zweiten Grades der Liniengeometrie in 
synthetischer Behandlung}, Theil 2, Leipzig, B.G. Teubner, 1898. 

\bibitem{Sturm2} R.~Sturm, \textit{Die Lehre 
von den geometrischen Verwandentschaften}, Band IV. Leipzig. B.G. Teubner. 1909.


\bibitem{Zariski} O.~Zariski, \textit{Algebraic surfaces}. Second supplemented edition. With appendices by S. S. Abhyankar, J. Lipman, and D. Mumford. 
Ergebnisse der Mathematik und ihrer Grenzgebiete, Band 61. Springer-Verlag, New York-Heidelberg, 1971.
\end{thebibliography}
\end{document}